\lstdefinestyle{C}{
    language=C,
    basicstyle=\small\ttfamily,
    keywordstyle=\small\ttfamily,
    morekeywords={omp,simd,reduction,simdlen,declare,inline,bool,restrict,half},
    otherkeywords={\#pragma,\_\_fp16},
    frame = single,
    captionpos=b,
    abovecaptionskip=1em,
}
\DeclareMathOperator*{\argmin}{argmin} 
\newtheorem{theorem}{Theorem}[section]
\newtheorem{corollary}[theorem]{Corollary}
\newtheorem{lemma}[theorem]{Lemma}
\title{\Huge Approximating inverse cumulative distribution functions to produce approximate random variables}
\author{
\href{mailto:mike.giles@maths.ox.ac.uk}{Michael Giles}%
\thanks{\href{mailto:mike.giles@maths.ox.ac.uk}%
{\texttt{mike.giles@maths.ox.ac.uk}}} 
\and 
\href{mailto:oliver.sheridan-methven@hotmail.co.uk}{Oliver Sheridan-Methven}%
\thanks{\href{mailto:oliver.sheridan-methven@hotmail.co.uk}%
{\texttt{oliver.sheridan-methven@hotmail.co.uk}}}
}
\date{
Mathematical Institute, Oxford University, UK\\[1em]
\datedayname\ \nth{\number\day} \monthname\  \number\year}
\begin{document}

\maketitle

\begin{abstract}
For random variables produced through the inverse transform method, approximate random variables are introduced, which are produced by approximations to a distribution's inverse cumulative distribution function. These approximations are designed to be computationally inexpensive, and much cheaper than library functions which are exact to within machine precision, and thus highly suitable for use in Monte Carlo simulations. The approximation errors they introduce can then be eliminated through use of the multilevel Monte Carlo method.  Two approximations are presented for the Gaussian distribution: a piecewise constant on equally spaced intervals, and a piecewise linear using geometrically decaying intervals. The errors of the approximations are bounded and the convergence demonstrated, and the computational savings measured for C and C++ implementations. Implementations tailored for Intel and Arm hardwares are inspected, alongside hardware agnostic implementations built using OpenMP. The savings are incorporated into a nested multilevel Monte Carlo framework with the Euler-Maruyama scheme to exploit the speed ups without losing accuracy, offering speed ups by a factor of 5--7. These ideas are empirically extended to the Milstein scheme, and the non central $ \chi^2 $ distribution for the Cox-Ingersoll-Ross process, offering speed-ups of a factor of 250 or more.  

\begin{description}
\item[Keywords:] approximations, random variables, inverse cumulative distribution functions, random number generation, the Gaussian distribution, geometric Brownian motion, the Cox-Ingersoll-Ross process, the non central $ \chi^2 $ distribution, multilevel Monte Carlo, the Euler-Maruyama scheme, the Milstein scheme, and high performance computing.
\item[MSC subject classification:] 	65C10, 41A10, 65D15, 65C05, 62E17, 65Y20, 60H35, and 65C30.
\end{description}
\end{abstract}

\section{Introduction}
\label{sec:introduction}

Random number generation is a fundamental building block of a wide range of computational tasks, in particular financial simulations \citep{glasserman2013monte,asmussen2007stochastic,joy1996quasi,xu2015high}. A frequent performance bottleneck is the generation of random numbers from a specific statistical distribution such as in Monte Carlo simulations.  While computers have many excellent and fast implementations of random number generators for the uniform distribution on the interval $(0,1)$, sampling random variables from a generic distribution is often computationally much more expensive.

We consider random variables produced using the inverse transform method \citep{glasserman2013monte}, which enables sampling from any uni-variate distribution, and thus is very widely applicable. Additionally, the inverse transform method is crucial for quasi-Monte Carlo simulations \citep{giles2009multilevel_qmc,lecuyer2016randomized} as no samples are rejected and the low-discrepancy property is preserved \citep{tezuka1995uniform}; such applications are common in financial simulations \citep{joy1996quasi,xu2015high}. Furthermore, we demonstrate how the inverse transform method is particularly well suited to analysis, and that it naturally provides a coupling mechanism for multilevel Monte Carlo applications for a range of stochastic processes, statistical distributions, and numerical schemes. 

Our analysis focuses on the Gaussian distribution (a.k.a.\ the Normal distribution), and the motivations are threefold. Firstly, the distribution is representative of several continuous distributions, and due to the central limit theorem is often the limiting case. Secondly, it is analytically tractable and often admits exact results or is amenable to approximation. Lastly, it is ubiquitous in both academic analysis and scientific computation, with its role cemented within It\^{o} calculus and financial simulations. 

To produce Gaussian random variables will require the Gaussian distribution's inverse cumulative distribution function. Constructing approximations accurate to machine precision has long been investigated by the scientific community \citep{hastings1955approximations,evans1974algorithm70,beasley1985percentage,wichura1988algorithm,marsaglia1994rapid,giles2011approximating}, where the \textit{de facto} routine implemented in most libraries is by \citet{wichura1988algorithm}. While some applications may require such accurate approximations, for many applications such accuracy is excessive and unnecessarily costly, as is typically the case in Monte Carlo simulations. 

To alleviate the cost of exactly sampling from the Gaussian distribution, a popular circumvention is to substitute these samples with random variables with similar statistics. The bulk of such schemes follow a moment matching procedure, where the most well known is to use Rademacher random variables (which take the values $ \pm 1 $ with equal probability \citep[page~XXXII]{kloeden1999numerical}, giving rise to the weak Euler-Maruyama scheme), matching the mean and variance. Another is to sum twelve uniform random variables and subtract the mean \citep[page~500]{munk2011fixed}, which also matches the mean and variance, and is still computationally cheap. The most recent work in this direction is by \citet{muller2015improving}, who produce either a three or four point distribution, where the probability mass is positioned so the resulting distribution's moments match the lowest few moments of the Gaussian distribution; as in this paper, they combine this with use of the multilevel Monte Carlo (MLMC) method but in a way which is not directly comparable.

The direction we follow in this paper is more closely aligned to the work by \citet{giles2019random_quadrature,giles2019random_multilevel}, whose analysis proposes a cost model for producing the individual random bits constituting a uniform random number. They truncate their uniforms to a fixed number of bits and then add a small offset before using the inverse transform method. The nett result from this is to produce a piecewise constant approximation, where the intervals are all of equal width, and the values are the midpoint values of the respective intervals. 

The work we present directly replaces random variables produced from the inverse transform method using the exact inverse cumulative distribution function, with those produced using an approximation to the inverse cumulative distribution function. While this is primarily motivated by computational savings, our framework encompasses the distributions produced from the various moment matching schemes and the truncated bit schemes. 

Having a framework capable of producing various such distributions has several benefits. The first is that by refining our approximation, we can construct distributions resembling the exact distribution to an arbitrary fidelity. This allows for a trade-off between computational savings, and a lower degree of variance between the exact distribution and its approximation. This naturally  introduces two tiers of simulations: those using a cheap but approximate distribution, and those using an expensive but near-exact distribution. This immediately facilitates the multilevel Monte Carlo setting of \citet{giles2008multilevel}, where fidelity and cost are balanced to minimise the computational time. As an example, we will see in section~\ref{sec:multilevel_monte_carlo} that Rademacher random variables, while very cheap, are too crude to exploit any savings possible with multilevel Monte Carlo, whereas our higher fidelity approximations can fully exploit the possible savings. 

The second benefit of our approach is that while the approximations are  specified mathematically, their implementations are left unspecified. This flexibility facilitates constructing approximations which can be tailored to a specific hardware or architecture. We will present two approximations, whose implementations can gain speed by transitioning the work load from primarily using the floating point processing units to instead exploiting the cache hierarchy. Further to this, our approximations are designed with vector hardware in mind, and are non branching, and thus suitable for implementation using single instruction multiple data (SIMD) instructions, (including Arm's new scalable vector extension (SVE) instruction set). Furthermore, for hardware with very large vectors, such as the \SI{512}{\bit} wide vectors on Intel's AVX-512 and Fujitsu's Arm-based A64FX (such as those in the new Fugaku supercomputer), we demonstrate implementations unrivalled in their computational performance on the latest hardware. Previous work using low precision bit wise approximations targetted at reconfigurable field programmable gate arrays has previously motivated the related works by \citet{brugger2014mixed}, \citet{omland2015exploiting}, and \citet{cheung2007hardware}.

We primarily focus our attention on the analysis and implementation of two approximations: a piecewise constant approximation using equally sized intervals, and a piecewise linear approximation on a geometric sequence of intervals. The former is an extension of the work by \citet[theorem~1]{giles2019random_quadrature} to higher moments, while the latter is a novel analysis, capable of both the highest speeds and fidelities. Although we will demonstrate how these are incorporated into a multilevel Monte Carlo framework, the subsequent analysis is performed by \citeauthor{giles2020approximate} \citep{giles2020approximate,sheridan2020nested} and omitted from this work. 

Having outlined our approximation framework and the incorporation of approximate random variables within a nested multilevel Monte Carlo scheme, we quantify the savings a practitioner can expect. For a geometric Brownian motion process from the Black-Scholes model \citep{black1973pricing} using Gaussian random variables, savings of a factor of 5--7 are possible. Furthermore, for a Cox-Ingersoll-Ross process \citep{cox1985theory} using non-central $ \chi^2 $ random variables, savings of a factor of 250 or more are possible. These savings are benchmarked against the highly optimised Intel C/C++ MKL library for the Gaussian distribution, and against the CDFLIB library \citep{brown1994dcdflib,burkardt2020cdflib} in C and Boost library \citep{boost2020library} in C++ for the non-central $ \chi^2 $ distribution. 

Section~\ref{sec:approximate_gaussian_random_variables} introduces and analyses our two approximations, providing bounds on the error of the approximations. Section~\ref{sec:high_performance_impementations} discusses the production of high performance implementations, the code for which is collected into a central repository maintained by \citet{sheridan2020approximate_random,sheridan2020approximate_inverse}. Section~\ref{sec:multilevel_monte_carlo} introduces multilevel Monte Carlo as a natural application for our approximations, and demonstrates the possible computational savings. Section~\ref{sec:the_non_central_chi_squared_distribution} extends our approximations to the non central $ \chi^2 $ distribution, representing a very expensive parametrised distribution which arises in simulations of the Cox-Ingersoll-Ross process. Lastly, section~\ref{sec:conclusions} presents the conclusions from this work. 

\section{Approximate Gaussian random variables}
\label{sec:approximate_gaussian_random_variables}

The inverse transform method \citep[2.2.1]{glasserman2013monte} produce univariate random variable samples from a desired distribution by first generating a uniform random variable $U$ on the unit interval $(0,1)$, and then computing $C^{-1}(U)$, where $C^{-1}$ is the inverse of the distribution's cumulative distribution function (sometimes called the percentile or quantile function). We will focus on sampling from the Gaussian distribution, whose inverse cumulative distribution function we denote by $ \Phi^{-1} \colon (0, 1) \to \mathbb{R} $ (some authors use $ N^{-1} $), and similarly whose cumulative distribution function and probability density function we denoted by $ \Phi $ and $ \phi $ respectively. 

Our key proposal is to use the inverse transform method with an approximation to the inverse cumulative distribution function. As the resulting distribution will not exactly match the desired distribution, we call random variables produced in this way \emph{approximate random variables}, and those without the approximation as \emph{exact random variables} for added clarity. In general, we will denote exact Gaussian random variables by $ Z $ and approximate Gaussian random variables by $ \widetilde{Z} $. The key motivation for introducing approximate random variables is that they are computationally cheaper to generate than exact random variables. Consequently, our key motivating criterion in forming approximations will be a simple mathematical construction, hoping this fosters fast implementations. As such there is a trade-off between simplicity and fidelity, where we will primarily be targetting simplicity. 

In this section, we present two approximation types: a piecewise constant, and a piecewise linear. For both we will bound the $ L^p $ error, focusing on their mathematical constructions and analyses. Their implementations and utilisation will be detailed in sections~\ref{sec:high_performance_impementations} and \ref{sec:multilevel_monte_carlo}. Both analyses will share and frequently use approximations for moments and tail values of the Gaussian distribution, which we gather together in section~\ref{sec:approximating_tail_values_and_high_order_moments}. Thereafter, the piecewise constant and linear approximations are analysed in sections~\ref{sec:piecewise_constant_approximations_on_equal_intervals} and \ref{sec:piecewise_linear_approximations_on_geometric_intervals}.

\subsection{Approximating tail values and high order moments}
\label{sec:approximating_tail_values_and_high_order_moments}

We require bounds on the approximations of tail values and high order moments. We use the notation of \citet{giles2019random_quadrature} that $ f(z) \approx g(z) $ denotes $ \lim_{z\to\infty} \tfrac{f(z)}{g(z)} = 1 $. Our key results are lemmas~\ref{lemma:approximate_tail_values} and \ref{lemma:approximate_moments} which bound the tail values and high order moments. Lemma~\ref{lemma:approximate_tail_values} is an extension of a similar result by \citet[lemma~7]{giles2019random_quadrature}, extended to give enclosing bounds. Similarly, lemma~\ref{lemma:approximate_moments} is partly an extension of a result by \citet[lemma~9]{giles2019random_quadrature}, but extended to arbitrarily high moments rather than just the second. As such, neither of these lemmas are particularly noteworthy in themselves and their proofs resemble work by \citet[appendix~A]{giles2019random_quadrature}. Similarly, our resulting error bounds for the piecewise constant approximation in section~\ref{sec:piecewise_constant_approximations_on_equal_intervals} will closely resemble a related result by \citet[theorem~1]{giles2019random_quadrature}. However, our main result for the piecewise linear approximation in section~\ref{sec:piecewise_linear_approximations_on_geometric_intervals} is novel and will require these results, and thus we include them here primarily for completeness. 

\begin{lemma}
\label{lemma:approximate_tail_values}
Defining $ z_q \coloneqq \Phi^{-1}(1 {-} 2^{-q})$, then for $ q \geq  2 $ we have the bounds $ 1 < \tfrac{2^q\phi(z_q)}{z_q} < (1 - z_q^{-2})^{-1}$  and for  $ q \geq 10 $
$ \sqrt{q \log 4 - \log(q \pi (81/32) \log 4)} < z_q  < \sqrt{q \log 4}$.
\end{lemma}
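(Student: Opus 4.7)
The plan is to base everything on the classical Mills' ratio bounds for the Gaussian tail, namely
\[
\frac{\phi(z)}{z}\bigl(1 - z^{-2}\bigr) \;\leq\; 1 - \Phi(z) \;\leq\; \frac{\phi(z)}{z},
\qquad z > 0,
\]
which follow from integration by parts on $\int_z^\infty \phi(t)\,dt$. Since $z_q$ is defined by $1 - \Phi(z_q) = 2^{-q}$, substituting into these inequalities immediately gives
\[
\frac{\phi(z_q)}{z_q}\bigl(1 - z_q^{-2}\bigr) \;\leq\; 2^{-q} \;\leq\; \frac{\phi(z_q)}{z_q},
\]
which rearranges to $1 \leq \tfrac{2^q\phi(z_q)}{z_q} \leq (1 - z_q^{-2})^{-1}$. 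To express this in the $\lessapprox$ notation of the lemma, I would note that $z_q \to \infty$ as $q \to \infty$ (because $2^{-q} \to 0$), so the correction factor $(1 - z_q^{-2})^{-1} \to 1$ and both bounds are asymptotically tight, which is exactly what $\lessapprox$ requires.

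For the second pair of bounds I would take logarithms of the first pair. Writing $\phi(z_q) = (2\pi)^{-1/2}\exp(-z_q^2/2)$ and applying $\log$ to $\tfrac{2^q\phi(z_q)}{z_q} \approx 1$ gives
\[
q\log 2 \;-\; \tfrac{1}{2}\log(2\pi) \;-\; \tfrac{z_q^2}{2} \;-\; \log z_q \;\approx\; 0,
\]
i.e.\ $z_q^2 \approx q\log 4 - \log(2\pi z_q^2)$. The upper bound $z_q^2 \lessapprox q\log 4$ drops out immediately from discarding the negative $\log(2\pi z_q^2)$ term (together with the first bound from the previous paragraph). Feeding this upper bound back into the right-hand side then gives $\log(2\pi z_q^2) \lessapprox \log(2\pi q \log 4) = \log(q\pi\log 16)$, where I have used $2\pi\log 4 = 4\pi\log 2 = \pi\log 16$. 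This yields the lower bound $z_q^2 \gtrapprox q\log 4 - \log(q\pi\log 16)$, whence $\sqrt{q\log 4 - \log(q\pi\log 16)} \lessapprox z_q \lessapprox \sqrt{q\log 4}$.

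The only genuinely delicate step is the bootstrap argument in the last paragraph: one must first establish the crude upper bound on $z_q$ in order to control the $\log(2\pi z_q^2)$ correction inside the implicit equation for $z_q^2$, and then reinsert it to obtain the sharper lower bound. Everything else is manipulation of Mills' ratio and careful bookkeeping of the $\lessapprox$ symbol, which is where I would be most cautious to check that each inequality I write is both a true inequality for large enough $q$ and asymptotically tight, so that the ``$\lessapprox$'' conclusion is legitimate rather than merely ``$\lesssim$''.
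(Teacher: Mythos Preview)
Your proposal is correct and essentially identical to the paper's argument: the paper also derives the Mills' ratio bounds by integrating $\int_{z_q}^\infty \phi(s)\,ds$ by parts, obtains $2^{-q} \lessapprox \tfrac{\phi(z_q)}{z_q}$ and $2^{-q} \gtrapprox \phi(z_q)(\tfrac{1}{z_q} - \tfrac{1}{z_q^3})$, and then performs the same bootstrap (upper bound first, reinsert for the lower bound) with the same arithmetic $2\pi\log 4 = \pi\log 16$. The only cosmetic difference is that you state Mills' ratio as a named classical inequality whereas the paper writes out the integration-by-parts expansion explicitly.
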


\begin{proof}
  $ 2^{-q} = 1 - \Phi(z_q) = \int_{z_q}^{\infty} \phi(s) \dd{s} $.
  Integrating by parts using $ \phi'(z) =  -z \phi(z) $ gives
\[
\int_{z_q}^{\infty} \phi(s) \dd{s}
\ =\ \phi(z_q) \tfrac{1}{z_q} - \int_{z_q}^{\infty} \tfrac{1}{s^2} \phi(s) \dd{s}
\ =\ \phi(z_q) (\tfrac{1}{z_q} - \tfrac{1}{z^3_q}) + \int_{z_q}^{\infty} \tfrac{1}{4s^4} \phi(s) \dd{s},
\]
which proves that $\phi(z_q) \tfrac{1}{z_q} > 2^{-q} > \phi(z_q)(\tfrac{1}{z_q} - \tfrac{1}{z_q^3}) $ and hence we obtain the first inequality.

For $q \geq 10$, $z_q > 3$ and so $(1{-}z_q^{-2})^{-1} < \tfrac{9}{8}$. The inequality $\tfrac{9}{8} 2^{-q} z_q > \phi(z_q) > z_q$ then gives
\[
\sqrt{q \log 4 - \log (2\pi \tfrac{81}{64} z_q^2)} < z_q <
\sqrt{q \log 4 - \log (2\pi z_q^2)}.
\]
Inserting
the upper bound $z_q<\sqrt{q \log 4}$ into the left hand term, and
the lower bound $z_q^2 > 1/2\pi$ in the right hand term,
gives the desired second inequality.
\end{proof}

\begin{lemma}
\label{lemma:approximate_moments}
For integers $ p \geq 2 $ we have $ \int_{0}^{z} \phi(s)^{1-p} \dd{s} \approx \tfrac{\phi(z)^{1-p}}{(p-1)z}  $ and $ \int_{z}^{\infty} (s-z)^p\phi(s) \dd{s} \approx \tfrac{p!\phi(z)}{z^{p+1}} $ as $z\rightarrow\infty$.
\end{lemma}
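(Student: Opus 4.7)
Both estimates extend the tail-of-Gaussian calculation already performed in lemma~\ref{lemma:approximate_tail_values}: in each case one isolates an exponential factor that peaks at an endpoint and shows that the remaining factor varies slowly enough to be evaluated at that endpoint. I would treat the two integrals separately, since the first has its mass concentrated near the upper limit $s=z$, while the second, after a suitable rescaling, has its mass near its lower limit on a scale of $1/z$.

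For the first integral, I would write $\phi(s)^{1-p} = (2\pi)^{(p-1)/2}\exp((p-1)s^2/2)$ and exploit the identity
\[
\frac{d}{ds}\!\left[\frac{\exp((p-1)s^2/2)}{(p-1)s}\right] = \exp\!\big((p-1)s^2/2\big) - \frac{\exp((p-1)s^2/2)}{(p-1)s^2},
\]
integrated over $[1,z]$. Rearranging, this produces a leading boundary term $\exp((p-1)z^2/2)/((p-1)z)$, a constant boundary piece at $s=1$, and a residual integral carrying an extra factor of $1/s^2$. Together with the omitted contribution $\int_0^1 \phi(s)^{1-p}\,\dd{s}$, the constant pieces form an $O(1)$ remainder that is negligible against the exponentially large leading term. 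The residual integral is smaller than the leading term by a factor of order $1/z^2$, either by one further integration by parts or by splitting at $s=z/2$ and using $1/s^2 \leq 4/z^2$ on $[z/2,z]$ while noting the integrand on $[1,z/2]$ is exponentially smaller. Restoring the prefactor $(2\pi)^{(p-1)/2}$ reassembles $\phi(z)^{1-p}$ and gives the claim.

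For the second integral, substituting $t = s-z$ and then $u = tz$ transforms it into
\[
\int_z^\infty (s-z)^p\phi(s)\,\dd{s} = \frac{\phi(z)}{z^{p+1}}\int_0^\infty u^p\exp\!\big(-u-\tfrac{u^2}{2z^2}\big)\,\dd{u}.
\]
The integrand is uniformly dominated by $u^p e^{-u}$ and converges pointwise to $u^p e^{-u}$ as $z\to\infty$, so dominated convergence sends the remaining integral to $\int_0^\infty u^p e^{-u}\,\dd{u} = p!$, which is the stated asymptotic.

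The main obstacle in each case is sharpening a two-sided bound into the asymptotic ratio being exactly $1$: for the first integral this requires confirming that the remainder after integration by parts really does decay like $1/z^2$ times the leading term, while for the second it reduces to the uniform domination $u^p\exp(-u-u^2/(2z^2)) \leq u^p e^{-u}$ that licenses dominated convergence. Both verifications are routine once the decompositions above are in place.
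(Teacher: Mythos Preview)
Your proposal is correct but differs in approach from the paper on both integrals. For the first, the paper simply applies L'H\^{o}pital's rule to the ratio $\int_0^z \phi(s)^{1-p}\dd{s}\big/\bigl(\phi(z)^{1-p}/((p-1)z)\bigr)$: differentiating numerator and denominator gives $\phi(z)^{1-p}\big/\bigl(\phi(z)^{1-p}(1-\tfrac{1}{(p-1)z^2})\bigr)\to 1$ in one line, whereas your integration-by-parts decomposition is longer but yields an explicit $O(1/z^2)$ remainder rate that L'H\^{o}pital does not. For the second integral the paper proceeds by iterated integration by parts, differentiating $(s-z)^p$ and integrating $\phi(s)$ via its asymptotic antiderivative $-\phi(s)(\tfrac{1}{s}-\tfrac{1}{s^3}+\cdots)$, then bounding $\tfrac{1}{s}-\tfrac{1}{s^3}+\cdots\leq\tfrac{1}{z}$ and iterating down to $\tfrac{p!}{z^p}\int_z^\infty\phi(s)\dd{s}$. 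Your substitution $u=(s-z)z$ followed by dominated convergence is cleaner and delivers the full two-sided asymptotic $\approx$ directly, whereas the paper's chain of $\lessapprox$ inequalities establishes the upper bound transparently but leaves the matching lower bound implicit in the $\lessapprox$ notation. Both routes are valid; yours is somewhat more self-contained for the second estimate.
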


\begin{proof}
Applying L'H\^{o}pital's rule gives
\begin{equation*}
\lim_{z \to \infty} \dfrac{\int_{0}^{z} \phi(s)^{1-p} \dd{s}}{\left(\dfrac{\phi(z)^{1-p}}{(p-1)z}\right)} 
=  \lim_{z \to \infty} \dfrac{\phi(z)^{1-p}}{\phi(z)^{1-p}\left(1 - \dfrac{1}{(p-1)z^2} \right)} 
= 1.
\end{equation*}
Similarly, applying L'H\^{o}pital's rule $p+1$ times gives
\[
\lim_{z \to \infty} \dfrac{\int_z^\infty (s{-}z)^p \phi(s) \dd{s}}{\left(\dfrac{p!\, \phi(z)}{z^{p+1}}\right)} 
=  \lim_{z \to \infty} \dfrac{(-1)^p p!\, \phi(z)}{(-1)^p p!\, \phi(z) \,(1 + O(1/z^2))}
= 1.\qedhere
\]
\end{proof}

\subsection{Piecewise constant approximations on equal intervals}
\label{sec:piecewise_constant_approximations_on_equal_intervals}

Mathematically it is straightforward to motivate a piecewise constant approximation as the simplest possible approximation to use, especially using equally spaced intervals. As the range of values will go from a continuous to a discrete set, we say the distribution has become \emph{quantised} and denote our approximation as $ Q \approx \Phi^{-1} $, where for a uniform random variable $ U \sim \mathcal{U}(0, 1)$ we have $ Z \coloneqq \Phi^{-1}(U) $ and $ \widetilde{Z} \coloneqq Q(U) $. A preview of such an approximation is shown in figure~\ref{fig:piecewise_constant_gaussian_approximation}, where the error will be measured using the $ L^p $ norm $ \lVert f \rVert_p \coloneqq (\int_0^1 \abs{f(u)}^p \dd{u})^{1/p} $.

\begin{figure}[htb]
\centering

\hfil
\subfigure[A piecewise constant approximation using 8 intervals.\label{fig:piecewise_constant_gaussian_approximation}]{\includegraphics{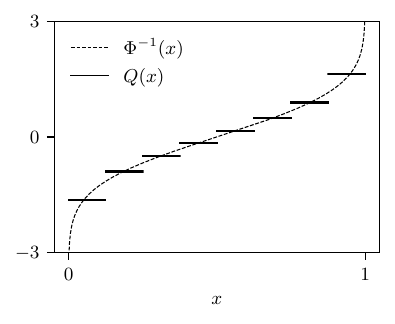}}\hfil
\subfigure[The $ L^p $ error and the bound from theorem~\ref{thm:piecewise_constant_approximation_error}.\label{fig:piecewise_constant_gaussian_approximation_error}]{\includegraphics{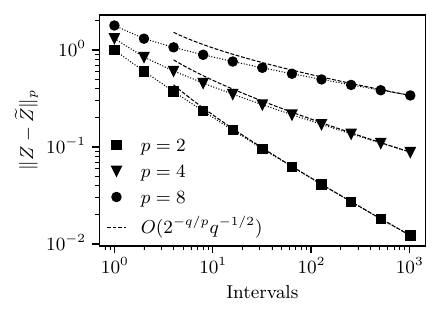}}\hfil

\caption{The piecewise constant approximation $ \widetilde{Z}_k^{L^1} $ from corollary~\ref{cor:piecewise_constant_constructions} with equally spaced intervals, and its error.}
\label{fig:piecewise_constant_approximation}
\end{figure}

Before presenting theorem~\ref{thm:piecewise_constant_approximation_error}, we can briefly comment on the error seen in figure~\ref{fig:piecewise_constant_gaussian_approximation_error}. Specifically looking at the root mean squared error (RMSE), corresponding to the $ L^2 $ norm, we can see that increasing the number of intervals from 2 to $ 10^3 $ gives a drop of $ 10^2 $ in the RMSE. For our multilevel Monte Carlo applications in section~\ref{sec:multilevel_monte_carlo}, having approximately 1000 intervals gives a very good fidelity, whereas Rademacher random variables have a very low fidelity. Being able to achieve a reasonable fidelity from our approximation ensures that we achieve the largest portion of the possible computational savings offered by our approximations. 

As we have already mentioned, the piecewise constant approximation is closely related to the resulting approximation produced by \citet{giles2019random_quadrature}, whose approximation arises from considering uniform random variables truncated to a finite number of bits of precision. Thus our main result from this section, theorem~\ref{thm:piecewise_constant_approximation_error}, closely resembles a related result by \citet[theorem~1]{giles2019random_quadrature}. To put our extension into context, we paraphrase the similar result from \citet{giles2019random_quadrature}, which is that for a piecewise constant approximation using $ 2^q $ intervals, for some integer $ q \gg 1 $, then for constant values equal to each interval's midpoint value they have $ \lVert Z - \widetilde{Z}\rVert_2^2 = O(2^{-q}q^{-1}) $. Our result from theorem~\ref{thm:piecewise_constant_approximation_error} extends this to $ \lVert Z - \widetilde{Z}\rVert_p^p = O(2^{-q} q^{-p/2}) $ for $ p \geq 2 $, and numerous other possible constant values other than the midpoint's. Our result enables us to increase the order of the error to arbitrarily high norms and is interesting in its own right. It shows that as the intervals become increasing small (corresponding to $ q \to \infty $), the dominant term effecting the $ L^p $ error is the geometric decay $ O(2^{-q/p}) $ due to the error in the two end intervals, and thus the convergence exists but is slower in higher norms, with the polynomial $ O(q^{-1/2}) $ term being comparatively negligible (as we can see in figure~\ref{fig:piecewise_constant_gaussian_approximation_error}). Additionally, in the related analysis incorporating approximate random variables into a nested multilevel Monte Carlo framework by \citeauthor{giles2020approximate} \citep{giles2020approximate,sheridan2020nested}, their bounds on the variance of the multilevel Monte Carlo correction term (discussed more in section~\ref{sec:multilevel_monte_carlo}) rely on the existence of the $ L^p $ error for $ p > 2 $. Hence, while this strengthening of the result may appear only slight, it is crucial for nested multilevel Monte Carlo. 

We can now present our main result concerning piecewise constant approximations, namely theorem~\ref{thm:piecewise_constant_approximation_error}. In this we will leave the interval values largely unspecified, and later demonstrate in corollary~\ref{cor:piecewise_constant_constructions} several choices fit within the scope of theorem~\ref{thm:piecewise_constant_approximation_error}.

\begin{theorem}
\label{thm:piecewise_constant_approximation_error}
Let a piecewise constant approximation $ Q \approx \Phi^{-1} $ use $ 2^q $ equally spaced intervals for some integer $ q > 1 $. Denote the intervals $ I_k \coloneqq (k2^{-q}, (k+1)2^{-q}) \equiv (u_k, u_{k+1}) $ for $ k \in \{0, 1, 2, \ldots, K\} $ where $ K \equiv 2^q - 1 $. On each interval the approximation constant is $ Q_k \coloneqq Q(u) $ for $ u \in I_k $. We assume there exists a constant $ C $ independent of $ q $ such that:
\begin{enumerate}
\item \label{con:symmetry} $ Q_k = -Q_{K-k} $ for $ k \in \{0,1,2,\ldots,K\} $. 
\item \label{con:intermediate_values} $ \Phi^{-1}(u_k) \leq Q_k \leq \Phi^{-1}(u_{k+1}) $ for $ k \in \{1,2, \ldots, K-1\} $.
\item \label{con:bounded_asymptotic_growth} $ \Phi^{-1}(u_K) \leq Q_K \leq \Phi^{-1}(u_K) + Cq^{-1/2} $.
\end{enumerate}
Then for any even integer $ p \geq 2 $ we have $ \lVert  Q - \Phi^{-1} \rVert_p^p = O(2^{-q}q^{-p/2})$ as $ q \rightarrow \infty $.
\end{theorem}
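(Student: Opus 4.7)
\textbf{Proof plan for Theorem~\ref{thm:piecewise_constant_approximation_error}.} The plan is to split the integral $\lVert Q-\Phi^{-1}\rVert_p^p=\int_0^1|Q(u)-\Phi^{-1}(u)|^p\dd{u}$ into two regimes: a \emph{bulk} contribution from the interior intervals $I_1,\ldots,I_{K-1}$ and a \emph{tail} contribution from $I_0$ and $I_K$. The symmetry condition (\ref{con:symmetry}) together with the symmetry $\Phi^{-1}(1-u)=-\Phi^{-1}(u)$ of the Gaussian quantile makes the integrand symmetric about $u=1/2$, so it suffices to bound the contribution on $(1/2,1)$ and double.

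For the \emph{bulk}, I would change variables $u=\Phi(z)$, $\dd{u}=\phi(z)\dd{z}$, turning each interior integral into $\int_{z_k}^{z_{k+1}}|Q_k-z|^p\phi(z)\dd{z}$, where $z_k\coloneqq\Phi^{-1}(u_k)$. Condition (\ref{con:intermediate_values}) gives $|Q_k-z|\leq z_{k+1}-z_k$, so each term is bounded by $(z_{k+1}-z_k)^p\cdot 2^{-q}$. By the mean value theorem $z_{k+1}-z_k=2^{-q}/\phi(\zeta_k)$ for some $\zeta_k\in(z_k,z_{k+1})$, so the Riemann-style sum
\begin{equation*}
\sum_{k=1}^{K-1}(z_{k+1}-z_k)^p\cdot 2^{-q}\;\lessapprox\;2^{-pq}\int_0^{z_q}\phi(z)^{1-p}\dd{z}
\end{equation*}
identifies itself with an integral of $\phi^{1-p}$. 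Applying Lemma~\ref{lemma:approximate_moments} gives $\int_0^{z_q}\phi(z)^{1-p}\dd{z}\approx\phi(z_q)^{1-p}/((p-1)z_q)$, and then Lemma~\ref{lemma:approximate_tail_values} substitutes $\phi(z_q)\approx 2^{-q}z_q$ and $z_q\approx\sqrt{q\log 4}$, collapsing the bulk bound to $O(2^{-q}z_q^{-p})=O(2^{-q}q^{-p/2})$.

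For the \emph{tail}, condition (\ref{con:bounded_asymptotic_growth}) pins $Q_K$ within $Cq^{-1/2}$ of $z_q$. After the same change of variables, the tail integral becomes $\int_{z_q}^\infty|Q_K-z|^p\phi(z)\dd{z}$, and the triangle inequality together with $(a+b)^p\leq 2^{p-1}(a^p+b^p)$ splits it into two pieces: one bounded by $2^{p-1}(Cq^{-1/2})^p\cdot 2^{-q}$, which is already of the desired order, and the other by $2^{p-1}\int_{z_q}^\infty(z-z_q)^p\phi(z)\dd{z}$. The second part of Lemma~\ref{lemma:approximate_moments} evaluates this last integral as $\approx p!\,\phi(z_q)/z_q^{p+1}$, and using $\phi(z_q)\approx 2^{-q}z_q$ and $z_q\approx\sqrt{q\log 4}$ once more yields $O(2^{-q}q^{-p/2})$.

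The main obstacle is the bulk estimate: turning the discrete sum $\sum_k(z_{k+1}-z_k)^p\cdot 2^{-q}$ into the clean integral $2^{-pq}\int_0^{z_q}\phi(z)^{1-p}\dd{z}$ must be justified as a genuine Riemann-sum approximation, which requires that the interval spacing $z_{k+1}-z_k$ varies slowly enough on each $I_k$ that the $\zeta_k$ in the MVT can be replaced by a representative endpoint without affecting the asymptotics; the convexity of $\phi^{1-p}$ for $z>0$ and the monotone growth of the spacings toward the tail make this control routine but tedious. All remaining manipulations are direct applications of Lemmas~\ref{lemma:approximate_tail_values} and \ref{lemma:approximate_moments}, and the evenness of $p$ is used only to write $|Q_K-z|^p=(Q_K-z)^p$ without absolute values in the tail estimate.
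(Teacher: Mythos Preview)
Your proposal is correct and follows essentially the same strategy as the paper: symmetry reduces to $(1/2,1)$; the bulk is handled by bounding $|Q_k-z|\le z_{k+1}-z_k = 2^{-q}/\phi(\cdot)$, summing, and comparing to $2^{-pq}\int_0^{z_K}\phi^{1-p}$ via Lemma~\ref{lemma:approximate_moments}; the tail uses $(a+b)^p\le 2^{p-1}(a^p+b^p)$ together with condition~(\ref{con:bounded_asymptotic_growth}) and the second estimate of Lemma~\ref{lemma:approximate_moments}. The only notable difference is that the paper dispatches your ``main obstacle'' cleanly by using monotonicity of $\phi$ on $(0,\infty)$ to replace $\phi(\zeta_k)$ by the endpoint value $\phi(z_{k+1})$ and then bounding the resulting sum $2^{-q}\sum_k\phi(z_{k+1})^{-p}$ by a translated integral plus a single boundary term $2^{-q}\phi(z_K)^{-p}$, which avoids any delicate Riemann-sum argument.
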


\begin{proof} Defining $ A_k \coloneqq \int_{u_k}^{u_{k+1}} \lvert\Phi^{-1}(u) - Q_k\rvert^p \dd{u} $ for $ k \in \{0,1, \ldots, K\} $, then we have $ \lVert Q - \Phi^{-1} \rVert_p^p = \sum_{k=0}^{K} A_k $.  We use condition (\ref{con:symmetry}) to reduce our considerations to the domain $ (\tfrac{1}{2}, 1) $, where $ \Phi^{-1} $ is strictly positive and convex, and thus obtain $ \lVert Q - \Phi^{-1} \rVert_p^p = 2 \sum_{k=2^{q-1}}^{K-1} A_k + 2 A_K $. As $ \Phi^{-1} $ is convex, then from the intermediate value theorem there exists a $ \xi_k \in I_k $ such that $ Q_k = \Phi^{-1}(\xi_k) $. Noting that $ \dv{z} \Phi^{-1}(z) = \tfrac{1}{\phi(\Phi^{-1}(z))} $, then from the mean value theorem  for any $ u \in [u_k, \xi_k)$ there exists an $ \eta_k \in [u_k, \xi_k)$ such that $ \Phi^{-1}(u) - \Phi^{-1}(\xi_k) = \tfrac{u - \xi_k}{\phi(\Phi^{-1}(\eta_k))} $. Furthermore, as $ \phi $ is monotonically decreasing in $ (\tfrac{1}{2}, 1) $, then introducing $ z_k \coloneqq \Phi^{-1}(u_k) $ we have $ \lvert \Phi^{-1}(u) - Q_k\rvert \leq \tfrac{2^{-q}}{\phi(z_{k+1})}$. An identical argument follows for any $ u \in [\xi_k, u_{k+1})$ giving the same bound.  Using this to bound $ \sum_{k=2^{q-1}}^{K-1} A_k $ in our expression for $ \lVert Q - \Phi^{-1}\rVert_p^p $ gives
\begin{equation*}
\sum_{k=2^{q-1}}^{K-1} A_k 
\leq \sum_{k=2^{q-1}}^{K - 1} \left(\frac{2^{-q}}{\phi(z_{k+1})}\right)^p 2^{-q} 
\leq 2^{-qp - q} \sum_{k=2^{q-1}}^{K - 1}  \phi(z_{k+1})^{-p} 
\leq 2^{-pq} \int_{\frac{1}{2}}^{1 - 2^{-q}} \phi(\Phi^{-1}(u))^{-p} \dd{u} + 2^{-q(p+1)}\phi(z_K)^{-p},
\end{equation*}
where the last bound comes from considering a translated integral. Changing integration variables the integral becomes $  \int_{0}^{z_K} \phi(z)^{1-p} \dd{z} $, from which we can use lemma~\ref{lemma:approximate_moments} to give 
\begin{equation*}
\sum_{k=2^{q-1}}^{K-1} A_k 
\leq \frac{2^{-q} z_K^{-p}}{p-1} \left(\frac{2^q\phi(z_K)}{z_K}\right)^{1-p} + 2^{-q}z_K^{-p} \left(\frac{2^q\phi(z_K)}{z_K}\right)^{-p}
\leq 2^{-q} \left(\frac{p}{p-1}\right) z_K^{-p},
\end{equation*}
where the last bound follows from lemma~\ref{lemma:approximate_tail_values}. Turning our attention to the final interval's contribution $ A_K $, then using Jensen's inequality, condition~(\ref{con:bounded_asymptotic_growth}), and lemmas~\ref{lemma:approximate_tail_values} and \ref{lemma:approximate_moments} we obtain
\begin{equation*}
A_K 
\leq 2^{p-1} \int_{z_K}^{\infty} \lvert Q_K - z\rvert^p \phi(z) \dd{z} +  2^{p-1} \int_{u_K}^{1} \lvert Q_K - z_K \rvert^p \dd{u} 
\leq 2^{p-q-1} p! z_K^{-p} + 2^{p-q-1} C^p q^{-p/2}.
\end{equation*}
Combining our two bounds for $ \sum_{k=2^{q-1}}^{K-1} A_k $ and $ A_K $ into our expression for  $ \lVert Q - \Phi^{-1}\rVert_p^p $ we obtain
\begin{equation*}
\lVert Q - \Phi^{-1}\rVert_p^p 
\leq 2^{-q+1} \left(\frac{p}{p-1}\right) z_K^{-p} + 2^{p-q} p! z_K^{-p} + 2^{p-q} C^p q^{-p/2} 
\leq O(2^{-q}q^{-p/2}),
\end{equation*}
where the coefficients inside the $ O $-notation are only a function of $ p $ and not of $ q $. \qedhere
\end{proof}

\begin{corollary}
\label{cor:piecewise_constant_constructions}
For approximate Gaussian random variables $ \widetilde{Z} $ using constants $ \widetilde{Z}_k $ on intervals $ I_k $ constructed as either
\begin{equation*}
\label{eqt:approximate_normal_expected_value_construction}
\widetilde{Z}_k^\mathrm{E} \coloneqq \mathbb{E}(Z\mid \Phi(Z) \in I_k), 
\qquad 
\widetilde{Z}_k^\mathrm{C} \coloneqq {\Phi^{-1}}\left(\dfrac{\max I_k + \min I_k}{2}\right), 
\qquad \text{or} \qquad 
\widetilde{Z}_k^\mathrm{I} \coloneqq 
\begin{cases}
\Phi^{-1}(\min I_k) & \text{if } \min I_k \geq 0.5 \\
\Phi^{-1}(\max I_k) & \text{if } \max I_k < 0.5, 
\end{cases}
\end{equation*}
then all finite moments of $ \lvert\widetilde{Z}\rvert $ are uniformly bounded, and conditions~(\ref{con:symmetry}--\ref{con:bounded_asymptotic_growth}) of theorem~\ref{thm:piecewise_constant_approximation_error} are satisfied. 
\end{corollary}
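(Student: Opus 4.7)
The plan is to verify the three conditions of theorem~\ref{thm:piecewise_constant_approximation_error} in turn for each construction, and then to handle the uniform moment bounds. The unifying observation is that $ \Phi^{-1} $ is odd about $ u = \tfrac{1}{2} $, so $ \Phi^{-1}(1 - u) = -\Phi^{-1}(u) $, and the intervals $ I_k $ and $ I_{K-k} $ are reflections through $ \tfrac{1}{2} $. Condition~(\ref{con:symmetry}) then drops out immediately for $ \tilde{Z}_k^{\mathrm{C}} $ and $ \tilde{Z}_k^{\mathrm{I}} $ by substitution, and for $ \tilde{Z}_k^{L^1} $ by a change of sign $ Z \mapsto -Z $ in the conditional expectation. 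For condition~(\ref{con:intermediate_values}), I would appeal to the monotonicity of $ \Phi^{-1} $, which gives the bound directly for $ \tilde{Z}_k^{\mathrm{C}} $ and $ \tilde{Z}_k^{\mathrm{I}} $, and note that a conditional expectation always lies between the essential infimum and supremum of the integrand on the conditioning set, which covers $ \tilde{Z}_k^{L^1} $.

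The substantive part is condition~(\ref{con:bounded_asymptotic_growth}) on the final tail interval $ I_K = (1 - 2^{-q}, 1) $. For $ \tilde{Z}_K^{\mathrm{I}} $ this is free since $ \tilde{Z}_K^{\mathrm{I}} = \Phi^{-1}(u_K) $, and $ C = 0 $ suffices. For $ \tilde{Z}_K^{\mathrm{C}} = \Phi^{-1}(1 - 2^{-q-1}) $ I would factorise the squared-difference $ \Phi^{-1}(1 - 2^{-q-1})^2 - \Phi^{-1}(u_K)^2 $ as a product of a sum and a difference, and combine the asymptotic $ \Phi^{-1}(u_K) \sim \sqrt{q \log 4} $ from lemma~\ref{lemma:approximate_tail_values} with the cruder observation $ \Phi^{-1}(1 - 2^{-q-1})^2 - \Phi^{-1}(u_K)^2 = \log 4 + o(1) $ (obtained by taking logs of the tail approximation $ 2^{-q}\phi(\Phi^{-1}(u_K))/\Phi^{-1}(u_K) \to 1 $) to conclude $ \tilde{Z}_K^{\mathrm{C}} - \Phi^{-1}(u_K) = O(q^{-1/2}) $. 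For $ \tilde{Z}_K^{L^1} $, the tail conditional expectation reduces to the Mills-ratio quantity $ \phi(\Phi^{-1}(u_K)) / (1 - \Phi(\Phi^{-1}(u_K))) $, and the integration-by-parts expansion used inside lemma~\ref{lemma:approximate_tail_values} yields $ \tilde{Z}_K^{L^1} = \Phi^{-1}(u_K) + \Phi^{-1}(u_K)^{-1} + O(\Phi^{-1}(u_K)^{-3}) $, so $ \tilde{Z}_K^{L^1} - \Phi^{-1}(u_K) = O(q^{-1/2}) $.

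For uniform boundedness of $ \mathbb{E}(\lvert\tilde{Z}\rvert^r) $ for every finite $ r $, the cleanest route for $ \tilde{Z}^{L^1} $ is Jensen's inequality: for even $ r $, $ (\mathbb{E}(Z \mid \Phi(Z) \in I_k))^r \leq \mathbb{E}(Z^r \mid \Phi(Z) \in I_k) $, and summing with weights $ 2^{-q} $ gives $ \mathbb{E}((\tilde{Z}^{L^1})^r) \leq \mathbb{E}(Z^r) $ uniformly in $ q $; odd $ r $ then follow by H\"older. For $ \tilde{Z}^{\mathrm{C}} $ and $ \tilde{Z}^{\mathrm{I}} $, the inner intervals contribute at most a constant multiple of $ \mathbb{E}(\lvert Z \rvert^r) $ by monotonicity of $ \lvert \Phi^{-1}\rvert $ on each half (a straightforward interval-by-interval domination), while the two outermost intervals contribute at most $ 2 \cdot 2^{-q} \lvert \tilde{Z}_K\rvert^r = O(2^{-q} q^{r/2}) $, which vanishes as $ q \to \infty $ and is therefore uniformly bounded.

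The main obstacle I anticipate is promoting the $ \lessapprox $-style estimates of lemma~\ref{lemma:approximate_tail_values} to genuine uniform bounds with an explicit constant $ C $ in condition~(\ref{con:bounded_asymptotic_growth}); this amounts to fixing a threshold $ q \geq q_0 $ beyond which the asymptotic bounds hold with explicit slack, and then absorbing the finitely many smaller $ q $ into the constant. Everything else is a consequence of monotonicity, oddness about $ \tfrac{1}{2} $, and Jensen's inequality.
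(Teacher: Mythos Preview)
Your proposal is correct, and the substantive content for condition~(\ref{con:bounded_asymptotic_growth}) in the $L^1$ case is essentially the same computation as the paper's: your Mills-ratio expansion $\tilde{Z}_K^{L^1} = \phi(z_K)/(1-\Phi(z_K)) = z_K + z_K^{-1} + O(z_K^{-3})$ is exactly what the paper obtains by writing $Q_K - z_K = 2^q\int_{z_K}^\infty (z - z_K)\phi(z)\,\dd z$ and invoking lemmas~\ref{lemma:approximate_tail_values} and \ref{lemma:approximate_moments}.

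The main structural difference is that the paper does not treat the three constructions separately. Instead it observes, via the Hermite--Hadamard inequality applied to the convex $\Phi^{-1}$ on $[\tfrac{1}{2},1)$, that on each right-half interval $0 \le \tilde{Z}_k^{\mathrm{I}} \le \tilde{Z}_k^{\mathrm{C}} \le \tilde{Z}_k^{L^1}$. This single pointwise ordering immediately transfers both the uniform moment bound and the upper half of condition~(\ref{con:bounded_asymptotic_growth}) from $\tilde{Z}^{L^1}$ to the other two constructions, so only one case needs to be worked out. Your route---checking each construction directly, with a separate squared-difference argument for $\tilde{Z}_K^{\mathrm{C}}$ and an interval-shift domination for the moments of $\tilde{Z}^{\mathrm{C}}$ and $\tilde{Z}^{\mathrm{I}}$---is perfectly valid and self-contained, but it is more laborious; the paper's ordering trick buys a noticeably shorter proof at the cost of invoking Hermite--Hadamard.
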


\begin{proof}
As all three approximations are anti-symmetric about $u = 1/2$, and therefore all satisfy conditions~(\ref{con:symmetry}, we consider the domain $ (\tfrac{1}{2}, 1) $ where $ Z $ and $ \widetilde{Z} $ are both positive. Letting the interval $ I_k $ lie in this domain, then from Jensen's inequality and the law of iterated expectations we obtain 
\begin{equation*}
\mathbb{E}((\widetilde{Z}^{\mathrm{E}})^n) = \mathbb{E}((\mathbb{E}_{I_k}(Z \mid \Phi(Z) \in I_k))^n) \leq \mathbb{E}(\mathbb{E}_{I_k}(Z^n \mid \Phi(Z) \in I_k)) =  \mathbb{E}(Z^n) < \infty,
\end{equation*}
for any $ 1\leq n < \infty $, where the last inequality is a standard result. Mirroring this result to the full domain $ (0, 1) $, we can directly see that the moments of $ \lvert \widetilde{Z}^{\mathrm{E}} \rvert $ are uniformly bounded. Furthermore, as $ \Phi^{-1} $ is increasing and convex in $ [\tfrac{1}{2}, 1) $, then the value $ \widetilde{Z}_k^\mathrm{E} $ is an upper bound on $ \widetilde{Z}_k^\mathrm{C} $ and $ \widetilde{Z}_k^\mathrm{I} $, and so these too have uniformly bounded moments.

It is immediately clear that all three choices satisfy condition~\ref{con:intermediate_values}) and the lower bound in \ref{con:bounded_asymptotic_growth}). As $ \widetilde{Z}_k^\mathrm{E} $ is an upper bound for the other two choices, it will suffice to show that this satisfies the upper bound in \ref{con:bounded_asymptotic_growth}). Inspecting the difference between the smallest value in the final $ I_K $ interval, namely $ z_K $, and the average value $ Q_K = \mathbb{E}(Z \mid \Phi(z) \in I_K) $, we obtain
\begin{equation*}
Q_K - z_K = 2^q \int_{u_K}^{1} \left(\Phi^{-1}(u) - \Phi^{-1}(u_K)\right) \dd{u} = 2^q \int_{z_K}^{\infty} (z - z_K)\, \phi(z) \dd{z} <  2^q \dfrac{\phi(z_K)}{z_K^2} < \dfrac{1}{z_K} (1 - z_K^{-2})^{-1} \leq \dfrac{9}{8 z_K},
\end{equation*}
where the first inequality follows from lemma~\ref{lemma:approximate_moments}, the second from lemma~\ref{lemma:approximate_tail_values}, and the last uses $ (1 - z_K^{-2})^{-1} \leq \tfrac{9}{8} $ for $ q \geq 10 $. We bound the final $ \tfrac{1}{z_K} $ term using lemma~\ref{lemma:approximate_tail_values} for $q\geq  10$ to give $ \tfrac{1}{z_K} < 1.1 \, q^{-1/2}$ 
and therefore $ Q_K \leq z_K + 1.25 \,q^{-1/2} $. \qedhere
\end{proof}

We remark that the $ \widetilde{Z}_k^\mathrm{C} $ construction from corollary~\ref{cor:piecewise_constant_constructions} is the same as the value used in the analysis of \citet[(4)]{giles2019random_quadrature}. In any piecewise constant approximations we use, such as that in figure~\ref{fig:piecewise_constant_gaussian_approximation}, we will use the constants defined by  the $ \widetilde{Z}_k^\mathrm{E} $ construction from corollary~\ref{cor:piecewise_constant_constructions}. Furthermore, we can see that our bound from theorem~\ref{thm:piecewise_constant_approximation_error} appears tight in figure~\ref{fig:piecewise_constant_gaussian_approximation_error}.

\subsection{Piecewise linear approximations on geometric intervals}
\label{sec:piecewise_linear_approximations_on_geometric_intervals}

Looking at the piecewise constant approximation in figure~\ref{fig:piecewise_constant_gaussian_approximation}, it is clear there are two immediate improvements that can be made. The first is to use a piecewise linear approximation, which is considerably more appropriate for the central region. Secondly, the intervals should not be of equal sizes, but denser near the singularities at either end. We will make both these modifications in a single step, where we will construct a piecewise linear approximation with geometrically decreasing intervals which are dense near the singularities. For brevity we will denote this just as the piecewise linear approximation. An example piecewise linear approximation using 8 intervals is shown in figure~\ref{fig:piecewise_linear_gaussian_approximation}. The precise nature of the interval widths, and how the linear functions are fitted will be detailed shortly, but by direct comparison against figure~\ref{fig:piecewise_constant_gaussian_approximation} it is clear that the fidelity of a piecewise linear approximation is much better than the piecewise constant. 

The main result from this section will be theorem~\ref{thm:piecewise_linear_approximation_error}, which will bound the $ L^p $ error of our piecewise linear approximation. The proof will proceed in a similar fashion the proof of theorem~\ref{thm:piecewise_constant_approximation_error}, where we will bound the sum of the central intervals and the end intervals separately. For the central intervals we will use the Peano kernel theorem to bound the point wise error, and in the end intervals several results will be a mixture of exact results and bounds from lemmas~\ref{lemma:approximate_tail_values} and \ref{lemma:approximate_moments}.

\begin{figure}[htb]
\centering

\hfil
\subfigure[A piecewise linear approximation using 8  intervals.\label{fig:piecewise_linear_gaussian_approximation}]{\includegraphics{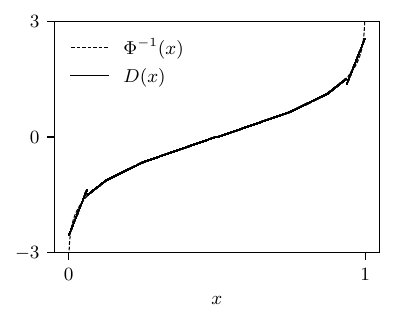}}\hfil
\subfigure[The $ L^2 $ error for various polynomial orders, with the number of intervals in $ (0, \tfrac{1}{2}) $ labeled.\label{fig:piecewise_linear_gaussian_approximation_error}]{\includegraphics{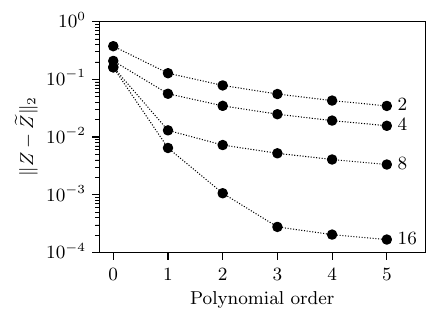}}\hfil

\caption{The piecewise linear approximation and its error from theorem~\ref{thm:piecewise_linear_approximation_error} with geometric intervals using $ r = \tfrac{1}{2} $.}
\label{fig:piecewise_linear_approximation}
\end{figure}

\begin{theorem}
\label{thm:piecewise_linear_approximation_error}
For an approximation $ D \approx \Phi^{-1}$ which is anti-symmetric about $ u=1/2$, with $ K $ intervals in $ (0, \tfrac{1}{2}) $, we define the $ k $-th interval $ I_k \coloneqq [\tfrac{r^k}{2}, \tfrac{r^{k-1}}{2})$ for $ k \in \{1,2,\ldots, K-1\} $ and $ I_K \coloneqq (0, \tfrac{r^{K-1}}{2}) $ for some decay rate $ r \in(0,1) $. Each interval uses a piecewise linear approximation $ D_k(u) \equiv D(u) $ for any $ u \in I_k $. The gradient and intercept in each interval is set by the $ L^2 $ minimisation $ D_k \coloneqq \argmin_{D' \in \mathcal{P}_1} \int_{I_k}\lvert\Phi^{-1}(u) - D'(u)\rvert^2 \dd{u} $ where $ \mathcal{P}_1 $  is the set of all 1-st order polynomials. Then we have for any $ 2 \leq p < \infty  $
\begin{equation*}
\lVert D - \Phi^{-1}\rVert_p^p 
= O((1-r)^{2p}) +
O(r^{K-1} {\log}^{-p/2}(r^{1-K}\sqrt{2/\pi}))  = O((1-r)^{2p}) +
o(r^{K-1}).
\end{equation*}
\end{theorem}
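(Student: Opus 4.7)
My strategy mirrors the proof of theorem~\ref{thm:piecewise_constant_approximation_error}: I exploit rotational symmetry to restrict to $(0, \tfrac{1}{2})$ and double, then decompose $\lVert D - \Phi^{-1}\rVert_p^p = 2\sum_{k=1}^{K-1} A_k + 2 A_K$ with $A_k \coloneqq \int_{I_k}|D(u) - \Phi^{-1}(u)|^p \dd{u}$. The central intervals and the end interval require fundamentally different treatments because $(\Phi^{-1})''$ is unbounded at $u=0$.

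For the central intervals, because $D_k$ is the $L^2$-projection of $\Phi^{-1}$ onto affine functions on $I_k$, it reproduces every affine function exactly, so Peano's kernel theorem supplies the pointwise bound $|\Phi^{-1}(u) - D_k(u)| \leq C w_k^2 \sup_{v \in I_k}|(\Phi^{-1})''(v)|$ with $w_k = (1-r)r^{k-1}/2$. Writing $(\Phi^{-1})''(u) = \Phi^{-1}(u)/\phi(\Phi^{-1}(u))^2$ and applying lemma~\ref{lemma:approximate_tail_values} (with $\phi(\zeta_k) \approx r^k \zeta_k/2$ for $\zeta_k \coloneqq |\Phi^{-1}(r^k/2)|$) gives $\sup_{I_k}|(\Phi^{-1})''| = O(1/(r^{2k}\zeta_k))$ once $r^k$ is small, producing $A_k = O(r^{k-1}(1-r)^{2p+1}\zeta_k^{-p})$. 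For the $k=1$ interval and for any $k$ with $r^k$ close to $1$, I instead use the Taylor expansion $\Phi^{-1}(u) = \sqrt{2\pi}(u-\tfrac{1}{2}) + O((u-\tfrac{1}{2})^3)$ to get $|(\Phi^{-1})''(u)| = O(|1-2u|)$. Summing the two regimes geometrically (with transition near $k \sim 1/(1-r)$) bounds the central contribution by $O((1-r)^{2p+1}) = O((1-r)^{2p})$, uniformly in $K$ and $r$.

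For the end interval $I_K = (0, w)$ with $w = r^{K-1}/2$ and $\zeta \coloneqq |\Phi^{-1}(w)|$, I rescale $u = wt$ and iterate the tail relation of lemma~\ref{lemma:approximate_tail_values} to obtain the asymptotic expansion $\Phi^{-1}(wt) = -\zeta + (\log t)/\zeta + O\bigl((1+\log^2 t)/\zeta^3\bigr)$, valid uniformly for $t \in [e^{-\zeta^2}, 1]$. I solve the normal equations for the $L^2$-optimal coefficients of $D_K$ using the exact identity $\int_0^w \Phi^{-1}(u)\dd{u} = -\phi(\zeta)$ (via $u=\Phi(z)$ and $z\phi(z) = -\phi'(z)$) and the further integration-by-parts evaluation $\int_0^w u\Phi^{-1}(u)\dd{u} = -w\phi(\zeta) + \int_\zeta^\infty \phi(z)^2 \dd{z}$. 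Reducing both to leading order in $1/\zeta$ via lemma~\ref{lemma:approximate_tail_values} yields $D_K(wt) = -\zeta - \tfrac{5}{2\zeta} + \tfrac{3t}{\zeta} + O(1/\zeta^3)$. The crucial cancellation of the $-\zeta$ terms produces the residual $R(t) \coloneqq \Phi^{-1}(wt) - D_K(wt) = (\log t - 3t + \tfrac{5}{2})/\zeta + O(1/\zeta^3)$, so $|R(t)| = O((|\log t|+1)/\zeta)$ on the main region. Since $\int_0^1 (|\log t|+1)^p \dd{t}$ is finite for every $p < \infty$, this region contributes $w \int |R|^p \dd{t} = O(w/\zeta^p)$ to $A_K$. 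On the failure region $u \in (0, we^{-\zeta^2})$, the crude bound $|R| \leq |\Phi^{-1}(u)| + |D_K(u)| = O(|\Phi^{-1}(u)|)$ together with lemma~\ref{lemma:approximate_moments} gives a contribution of size $O(w^3 \zeta^{p+2})$, exponentially subdominant since $w^2 \zeta^{2p+2} \sim \zeta^{2p} e^{-\zeta^2} \to 0$. Finally, lemma~\ref{lemma:approximate_tail_values} provides $\zeta^2 \sim 2\log(r^{1-K}\sqrt{2/\pi})$, rewriting $A_K = O(r^{K-1} \log^{-p/2}(r^{1-K}\sqrt{2/\pi}))$.

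The principal obstacle is the end-interval analysis: the asymptotic expansion of $\Phi^{-1}(wt)$ is only uniformly valid away from the singularity at $t=0$, so the $O(1/\zeta)$ coefficients of the best-fit $D_K$ must be tracked precisely enough to guarantee cancellation of the leading $-\zeta$ term in $R$, and the tiny region where the expansion fails must then be dispatched separately using the integrability of $|\log t|^p$ against Lebesgue measure and the exponential decay of $\phi$.
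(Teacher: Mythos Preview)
Your proposal is essentially correct and arrives at the same bounds, but it differs from the paper's argument in both halves.

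For the central intervals, the paper observes directly that with $a=\tfrac{r^k}{2}$ and $b-a=a(1-r)/r$, the Peano bound reads $|\varepsilon(u)|\le C\,a^2\bigl(\tfrac{1-r}{r}\bigr)^2\,|(\Phi^{-1})''(a)|$, and then bounds the product $a^2|(\Phi^{-1})''(a)|$ by its global maximum on $(0,\tfrac{1}{2})$ (a single numerical constant, attained near $a\approx 0.177$). This yields a \emph{uniform} pointwise bound $|\varepsilon(u)|\le C'((1-r)/r)^2$ and the sum over $k$ is then trivially $\le\tfrac{1}{2}C'^{\,p}((1-r)/r)^{2p}$. Your two-regime split (Taylor near $u=\tfrac{1}{2}$, tail asymptotics for small $r^k$) and geometric summation are valid but unnecessary; the single observation that $a^2|(\Phi^{-1})''(a)|$ is globally bounded absorbs both regimes at once.

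For the singular interval, the routes are genuinely different. The paper computes $\alpha,\beta$ in closed form via $\int_0^b\Phi^{-1}$ and $\int_0^b u\,\Phi^{-1}$, extracts the asymptotic $\beta b\approx 3/|z_b|$, then changes variable to $z$-space and splits at the lower zero $z_-$ of the error: on $(-\infty,z_-)$ it bounds $|z-\alpha-\beta\Phi(z)|\le|z-z_-|$ and invokes lemma~\ref{lemma:approximate_moments}; on $(z_-,z_b)$ it bounds the error by $|\beta b|\approx 3/|z_b|$. You instead rescale $u=wt$, expand $\Phi^{-1}(wt)=-\zeta+(\log t)/\zeta+\cdots$, project $\log t$ onto $\{1,t\}$ to obtain $D_K(wt)=-\zeta+(-\tfrac{5}{2}+3t)/\zeta+\cdots$, and integrate the residual using $\int_0^1|\log t|^p\dd{t}<\infty$. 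Both routes hinge on the same cancellation (the constant $-\zeta$ drops out, leaving an error of size $1/\zeta$), but the paper reaches it through exact integrals and a $z$-space split while you reach it through an asymptotic series in $t$. The paper's route is shorter and avoids a separate ``failure region'' because the $z$-space change of variables handles the singularity automatically via $\phi(z)$.

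One small technical point: your main-region cutoff $t\ge e^{-\zeta^2}$ is too ambitious for the claimed $O((1+\log^2 t)/\zeta^3)$ remainder, since at that boundary $\log^2 t/\zeta^3=\zeta$ is not small (indeed $\Phi^{-1}(we^{-\zeta^2})\approx -\sqrt{3}\,\zeta$, not $-2\zeta$). Taking the cutoff at $t\ge e^{-\zeta}$ (or any $e^{-o(\zeta^2)}$) repairs this without altering the structure, and your crude bound on the residual tail then still gives an exponentially subdominant contribution.
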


\begin{proof}
Considering the domain $ (0, \tfrac{1}{2}) $, we split the contribution into those from the intervals without the singularity, and that from the final interval with the singularity, where
\begin{equation*}
\lVert D - \Phi^{-1}\rVert_p^p   = 2  \sum_{k=1}^{K-1} \int_{I_k} \lvert D_k(u) - \Phi^{-1}(u)\rvert^p \dd{u}  + 2 \int_{I_K} \lvert D_K(u) - \Phi^{-1}(u)\rvert^p \dd{u},
\end{equation*}
where the factors of 2 correct for us only considering the lower half of the domain $ (0, 1) $. 

Beginning with the non-singular intervals, we express the pointwise error using the Peano kernel theorem \citep{iserles2009first,powell1981approximation}, which we will later bound. For notational simplicity, we denote the approximated function as $ f $, where $ f \equiv \Phi^{-1} $, and a given interval as $ [a,b] \equiv I_k  $. The $ L^2 $ optimal linear approximation is $ \alpha(f) + \beta(f) u $ for $ u \in [a,b] $ where $ \alpha $ and $ \beta $ are functionals. The point wise error $ f(u) - \alpha(f) - \beta(f)u $ is a linear mapping $ L $ acting on $ f $ where $ L(f)(u) \coloneqq  f(u) - \alpha(f) - \beta(f)u $. By construction $ L $ annihilates linear functions, so the Peano kernel is $ k(\xi; u) \coloneqq (u - \xi)^+ - \alpha((\cdot - \xi)^+) - \beta((\cdot - \xi)^+)u \equiv (u - \xi)^+ - \bar{\alpha}(\xi) - \bar{\beta}(\xi)u $ for $ \xi \in [a,b] $ where we defined $ \bar{\alpha}(\xi) \coloneqq \alpha((\cdot - \xi)^+) $ and similarly $ \bar{\beta}(\xi) $. The pointwise error is $ \varepsilon(u) \coloneqq L(f)(u) = \int_{a}^{b} k(\xi; u) f''(\xi) \dd{\xi} $.

To determine the intercept and gradient, we use that they are $ L^2 $ optimal, and so the functional derivatives of $ \int_a^b \lvert \varepsilon(u) \rvert^2 \dd{u} $ with respect to $ \alpha $ and $ \beta $ are zero, giving the simultaneous equations
\begin{equation*}
\alpha(f)(b-a) + \beta(f) \left(\dfrac{b^2 - a^2}{2}\right)  = \int_{a}^{b} f(u) \dd{u} 
\qquad \text{and} \qquad 
\alpha(f)\left(\dfrac{b^2 - a^2}{2}\right) + \beta(f) \left(\dfrac{b^3 - a^3}{3}\right)  = \int_{a}^{b} u f(u) \dd{u}.
\end{equation*}
It is important to notice that because we chose the $ L^2 $ norm, these are a set of linear simultaneous equations, and thus $ \alpha $ and $ \beta $ are linear functionals, thus showing that $ L $ is linear, (a requirement of the Peano kernel theorem). Evaluating these for the kernel function ($ f \to (\cdot - \xi)^+  $) gives
\begin{equation*}
\bar{\alpha}(\xi)  = - \dfrac{(b - \xi)^2 ((b+a)\xi - 2a^2)}{(b - a)^3} 
\qquad \text{and} \qquad 
\label{eqt:peano_kernel_coefficient}
\bar{\beta}(\xi) = \dfrac{(b - \xi)^2 (2\xi + b - 3a)}{(b - a)^3}.
\end{equation*}
Thus, the pointwise error is 
\begin{equation*}
\varepsilon(u)  = \int_{a}^{b} ((u - \xi)^+ - \bar{\alpha}(\xi) - \bar{\beta}(\xi) u ) f''(\xi) \dd{\xi} = (b - a)^2 \int_{0}^{1} \left((\widetilde{u} - \widetilde{\xi})^+ - (1 - \widetilde{\xi}^2)(\widetilde{\xi} + \widetilde{u})\right) f''((b - a)\widetilde{\xi} + a) \dd{\widetilde{\xi}},
\end{equation*}
where to achieve the last equality we rescaled our interval $ [a, b] \to [0, 1] $  and variables $ \eta \to \tilde{\eta}  $ where $ \tilde{\eta} \coloneqq \tfrac{\eta - a}{b - a} $. Taking the absolute value and applying Jensen's inequality immediately gives 
\begin{equation*}
\lvert \varepsilon(u) \rvert \leq - 6 (b - a)^2  {\dv[2]{\Phi^{-1}}{u}}(a) = - 6 \left(\dfrac{1-r}{r}\right)^2 a^2\, {\dv[2]{\Phi^{-1}}{u}}(a) \leq 2.59 \left(\dfrac{1-r}{r}\right)^2,
\end{equation*}
where for the first inequality we used that $ {\dv[2]{}{u}}\Phi^{-1} $ is maximal at the lower boundary, and for the second inequality we bound this by the maximum with respect to $ a $ (which is at $a \approx 0.177$). Using this expression for the pointwise error in our summation of the non-singular intervals gives (as $ r \to 1 $)
\begin{equation*}
 \sum_{k=1}^{K-1} \int_{I_k} \lvert D_k(u) - \Phi^{-1}(u)\rvert^p \dd{u}
= \sum_{k = 1}^{K - 1} \int_{I_k} \lvert\varepsilon(u)\rvert^p \dd{u}
\leq 2.59^p \left(\dfrac{1 - r}{r}\right)^{2p}  \int_{0}^{\frac{1}{2}} \dd{u}
= O((1-r)^{2p}).
\end{equation*}

We now consider the interval $ [0, b] \equiv I_K $ containing the singularity at 0, which has intercept and gradient
$ \alpha = \tfrac{1}{b} \int_{0}^{b} \Phi^{-1}(u) \dd{u} - \tfrac{b\beta}{2}  $
and 
$ \beta  = \tfrac{12}{b^3} \int_{0}^{b} (u - \tfrac{b}{2}) \Phi^{-1}(u) \dd{u} $.
These integrals can be calculated exactly (with a change of variables), where denoting $ z_b \coloneqq \Phi^{-1}(b) $ we obtain
\begin{equation*}
\alpha = \dfrac{2\phi(z_b)}{b} - \dfrac{3\Phi(\sqrt{2}z_b)}{b^2\sqrt{\pi}}
\qquad \text{and} \qquad 
\beta  = \dfrac{6}{b^3} \left(\dfrac{\Phi(\sqrt{2} z_b)}{\sqrt{\pi}} - b\phi(z_b)\right).
\end{equation*}
For the gradient, as the interval becomes ever smaller and $ b \to 0 $, we can use lemma~\ref{lemma:approximate_tail_values} to give
\begin{equation*}
\beta  
\approx -\dfrac{6}{b^3} \left(\phi^2(z_b)\left(\dfrac{1}{z_b} - \dfrac{1}{2 z_b^3}\right) + b \phi(z_b)\right)
\approx -\dfrac{6z_b}{b}\left(\dfrac{1}{2z_b^2} + O(z_b^{-4})\right)
\approx \dfrac{-3}{bz_b},
\end{equation*}
where in the first approximation we used  $ \phi(\sqrt{2}z) \equiv \sqrt{2\pi}\phi^2(z) $. Interestingly, this means the range of values $ \beta b \approx -\tfrac{3}{z_b} \to 0 $, and our approximation ``flattens'' relative to the interval $ [0, b] $ as $ b \to 0 $.

With the intercept and gradient in the singular interval $ I_K $ known exactly, we define the two points $ u_- $ and $ u_+ $ where the error is zero, where $ 0 < u_- < u_+ < b $, and there are two as $ \Phi^{-1} $ is concave in $ (0, \tfrac{1}{2}) $. Corresponding to these we define $ z_- \coloneqq \Phi^{-1}(u_-) $ and $ z_+ \coloneqq \Phi^{-1}(u_+) $, where $ -\infty < z_- < z_+ < z_b < 0 $. Thus in the singular interval we have
\begin{equation*}
\int_{I_K} \lvert D_K(u) - \Phi^{-1}(u)\rvert^p \dd{u} 
= \int_{-\infty}^{z_-} \lvert z - \alpha - \beta \Phi(z)\rvert^p \phi(z) \dd{z} + \int_{z_-}^{z_b} \lvert z - \alpha - \beta \Phi(z)\rvert^p \phi(z) \dd{z}.
\end{equation*}
Using lemmas~\ref{lemma:approximate_tail_values} and \ref{lemma:approximate_moments}, then for the first of these integrals we obtain
\begin{equation*}
\int_{-\infty}^{z_-} \lvert z - \alpha - \beta \Phi(z)\rvert^p \phi(z) \dd{z}
\leq \int_{-\infty}^{z_-} \lvert z - z_-\rvert^p \phi(z) \dd{z}
\leq \int_{-\infty}^{z_b} \lvert z - z_b\rvert^p \phi(z) \dd{z}
\approx  \dfrac{p!\phi(z_b)}{\lvert z_b\rvert^{p+1}}
=  O\left(\dfrac{b}{\lvert z_b \rvert^p}\right),
\end{equation*}
and for the second integral we similarly obtain
\begin{equation*}
\int_{z_-}^{z_b} \lvert z - \alpha - \beta \Phi(z)\rvert^p \phi(z) \dd{z}
\leq \int_{-\infty}^{z_b} \lvert z_+ - \alpha - \beta \Phi(z)\rvert^p \phi(z) \dd{z}
\leq \lvert \beta b \rvert^p \int_{0}^{b}\dd{u}
\approx \dfrac{3^p}{\lvert z_b\rvert^p} b
=  O\left(\dfrac{b}{\lvert z_b \rvert^p}\right).
\end{equation*}

Combining the results for the central intervals and the singular interval we obtain
\begin{equation*}
\lVert D - \Phi^{-1}\rVert_p^p
= O((1-r)^{2p}) + O\left(\dfrac{b}{\lvert z_b \rvert^p}\right) 
= O((1-r)^{2p}) +
O(r^{K-1} ({\log}(r^{1-K}\sqrt{2/\pi}))^{-p/2} )  
= O((1-r)^{2p}) +
o(r^{K-1}),
\end{equation*}
where for the second equality we used lemma~\ref{lemma:approximate_tail_values} in the limit $ r^{K-1} \to 0 $. \qedhere
\end{proof}

We can see from theorem~\ref{thm:piecewise_linear_approximation_error} that the $ O((1 - r)^{2p}) $ term comes from the central regions, and is reduced by taking $ r \to 1 $, and the $ o(r^{K-1}) $ term is from the singular interval, and is reduced by taking $ r^{K-1} \to 0 $. The key point of interest with this result is that the error from the central regions and the singular region is decoupled. In order to decrease the overall error, it is not sufficient to only increase the number of intervals ($ K \to \infty $), which would only improve the error from the singular interval, but the decay rate must also be decreased ($ r \to 1 $). The independence and interplay of these two errors is important for balancing the fidelity between the central and edge regions.

It is possible to generalise this construction and analysis to piecewise polynomial approximations, constructing the best $L^2$ approximation on each interval. We could also require the approximations to be continuous over the entire interval $ (0, 1) $ by turning our previous minimisation into a constrained minimisation, with a coupled set of linear constraints. While such a continuous approximation may be more aesthetically pleasing, it is of no practical consequence for the inverse transform method, and by definition will have a worse overall $ L^2 $ error than the discontinuous approximation from theorem~\ref{thm:piecewise_linear_approximation_error}, and thus we choose not to do this.

The errors from piecewise polynomial approximations for various polynomial orders and interval numbers are shown in figure~\ref{fig:piecewise_linear_gaussian_approximation_error}, where we have set the decay rate $ r = \tfrac{1}{2} $. As we increase the number of intervals used, then for the piecewise linear function, the $ L^2 $ error plateaus to approximately $ 10^{-2} $ for 16 intervals, which is approximately equal to the error for the piecewise constant approximation using $ 10^3 $ intervals. Thus we can appreciate the vast increase in fidelity that the piecewise linear approximation naturally offers over the piecewise constant approximation. Furthermore, this plateau demonstrates that the central regions are limiting the accuracy of the approximation. Inspecting the approximation using 16 intervals in $ (0, \tfrac{1}{2}) $, we see that as we increase the polynomial order from linear to cubic there is a considerable drop in the error approximately equal to a factor of $ 10^2 $, where the piecewise cubic approximation is achieving $ L^2 $ errors of around $ 10^{-4} $. However, we see that increasing the polynomial order any further does not lead to any significant reduction in the $ L^2 $ error, indicating that in this regime it is the error in the singular interval which is dominating the overall error. 

\section{High performance implementations}
\label{sec:high_performance_impementations}

Our motivation for presenting the piecewise constant and linear approximations was to speed up the inverse transform method, where the approximations were inherently simple in their constructions, with the suggestion that any implementations would likely be simple and performant. However, it is not obvious that the implementations from various software libraries, especially heavily optimised commercial libraries such as Intel's Maths Kernel Library (MKL), should be slow. In this section, we will mention how most libraries implement these routines, and why many are inherently ill posed for modern vector hardware. Our two approximations capitalise on the features where most libraries stumble, avoiding division and conditional branching, which will be the key to their success. We give a brief overview of their implementations in C and showcase their superior speed across Intel and Arm hardwares. While we will briefly outline the implementations here in moderate detail, a more comprehensive suite of implementations and experiments, including Intel AVX-512 and Arm SVE specialisations, are hosted in a centralised repository by \citet{sheridan2020approximate_random}.

\subsection{The shortcomings in standard library implementations}

Several libraries providers, both open source and commercial, offer implementations of the inverse Gaussian cumulative distribution function, (Intel, NAG, Nvidia, MathWorks, Cephes, GNU, Boost, etc.). The focus of these implementations is primarily ensuring near machine precision is achieved for all possible valid input values, and handling errors and edge cases appropriately. While some offer single precision implementations, most offer (or default to) double precision, and to achieve these precisions algorithms have improved over the years \citep{hastings1955approximations,evans1974algorithm70,beasley1985percentage,wichura1988algorithm,marsaglia1994rapid,giles2011approximating}. The most widely used is by \citet{wichura1988algorithm}, which is used in GNU's scientific library (GSL) \citep{galassi2017gsl}, the Cephes library \citep{moshier1992cephes} (used by Python's SciPy package \citep{scipy2020scipy}), and the NAG library \citep{nag2017mark26} (NAG uses a slight modification). 

These implementations split the function into two regions: an easy central region, and the difficult tail regions. Inside the central region a rational Pad\'{e} approximation is used with seventh order polynomials \citep{wichura1988algorithm}. In the tail regions, the square root of the logarithm is computed, and then a similar rational approximation performed on this. While these are very precise routines, they have several shortcomings with respect to performance.

The first shortcoming is the division operation involved in computing the rational approximation. Division is considerably more expensive than addition or multiplication, and with a much higher latency \citep{wittmann2015short,fog2018instruction}. Thus, for the very highest performance, avoiding division is preferable. (The implementation by \citet{giles2011approximating} already capitalises on this). Similarly, taking logarithms and square roots is expensive, making the tail regions very expensive, and so are best avoided too. 

The second short coming is that the routine branches: requesting very expensive calculations for infrequent tail values, and less expensive calculations for more frequent central values. Branching is caused by using ``\texttt{if-else}'' conditions, but unfortunately this often inhibits vectorisation, resulting in compilers being unable to issue single instruction multiple data (SIMD) operations \citep{vanderpas2017using}. Even if a vectorised SIMD implementation is produced, typically the results from both branches are computed, and the correct result selected \textit{a posteriori} by predication/masking. Thus, irrespective of the input, both the very expensive and less expensive calculations may be performed, resulting in the relatively infrequent tail values producing a disproportionately expensive overall calculation. This effect is commonly known as \emph{warp divergence} in GPU programs, and in our setting we will call this \emph{vector divergence}. The wider the vector and the smaller the data type, (and thus the greater the amount of vector parallelisation), the worse the impact of vector divergence. Noting that half precision uses only 16 bits, both Intel's AVX-512 and Fujitsu's A64FX are 512 bits wide, and Arm's SVE vectors can be up to 2048 bits \citep{petrogalli2016sneak_peak,stephens2017arm}, the degree of vector parallelisation can be substantial, and the impact of conditional branching becomes evermore crippling as vector parallelisation increases. 

In light of these two shortcomings, we will see our approximations can be implemented in a vectorisation friendly manner which avoids conditional branching and are homogenous in their calculations. Furthermore, their piecewise constant and polynomial designs avoid division operations, and only require addition, multiplication, and integer bit manipulations, and thus result in extremely fast executables. We will see that our piecewise constant approximation will rely on the high speed of querying the cache. Furthermore, for the piecewise linear approximation using a decay rate $ r = \tfrac{1}{2} $ and 16 intervals, then in single precision all the coefficients can be held in 512 bit wide vector registers, bypassing the need to even query the cache, and thus are extremely fast provided the vector widths are sufficiently large. 

\subsection{Implementing the piecewise constant approximation}

Splitting the domain $ (0, 1) $ into the $ N $ intervals $ [\tfrac{m}{N}, \tfrac{m+1}{N}) $ zero indexed by $ m \in \{0,1,2,\ldots,N-1\}$, the values for each interval are easily computed \textit{a priori}, and stored in a lookup table. An example of how this looks in C is shown in code~\ref{code:piecewise_constant_approximation}, where we use OpenMP (\texttt{omp.h}) to signal to the compiler that the \texttt{for} loop is suitable for vectorisation. This relies on typecasting to an integer, where any fractional part is removed. The benefit to such an implementation is that on modern chips a copy of the lookup table can readily fit within either the L1 or L2 caches, where 1024 values stored in 64 bit double precision consume \SI{8}{\kilo\byte}. As an example, an Intel Skylake Xeon Gold 6140 CPU has L1 and L2 caches which are \SI{32}{\kilo\byte} and \SI{2}{\mega\byte} respectively, and thus the lookup table can exploit the fast speeds of the L1 cache, which typically has latencies of 2--5 clock cycles. 

\begin{lstfloat}[htb]
\begin{lstlisting}[style=C, caption={C implementation of the piecewise constant approximation.}, label={code:piecewise_constant_approximation}]
#define LOOKUP_TABLE_SIZE 1024
const float lookup_table[LOOKUP_TABLE_SIZE] = {-3.3, -2.9, -2.8, ..., 2.8, 2.9, 3.3};

void piecewise_constant_approximation(const unsigned int n_samples, 
                                      const float * restrict input, 
                                      float * restrict output) {
    #pragma omp simd
    for (unsigned int n = 0; n < n_samples; n++) 
        output[n] = lookup_table[(unsigned int) (LOOKUP_TABLE_SIZE * input[n])];
}
\end{lstlisting}
\end{lstfloat}

\subsection{Implementing the piecewise linear approximation}

The piecewise linear approximation is a bit more involved than the piecewise constant approximation. Not only do we have a polynomial to evaluate (albeit only linear), but the varying widths of the intervals means identifying which interval a given value corresponds to is more involved. Once the appropriate interval's polynomial's coefficients are found, evaluating the polynomial is trivial, where the linear polynomial can be evaluated using a fused multiply and add (FMA) instruction. Higher order polynomials can be similarly computed (e.g.\ with Horner's rule). 

The primary challenge is determining which interval a given value corresponds to based on the construction from theorem~\ref{thm:piecewise_linear_approximation_error} for a given decay rate $ r $. As floating point numbers are stored in binary using their sign, exponent, and mantissa, the natural choice most amenable for computation is $ r = \tfrac{1}{2} $, which we call the \emph{dyadic} rate, producing the dyadic intervals shown in table~\ref{tab:dyadic_intervals}. From table~\ref{tab:dyadic_intervals}, we notice the intervals are only dense near the singularity at 0, but not at 1. This is not problematic, as we said in theorem~\ref{thm:piecewise_linear_approximation_error} that the approximation is anti-symmetric about $ u=1/2 $, and thus we can use the $ N $ intervals in $ (0, \tfrac{1}{2}) $, and if our input value is within $ (\tfrac{1}{2}, 1) $, it is straight forward to simply negate the value computed for the input reflected about $ \tfrac{1}{2} $ (equivalent to using the inverse complementary cumulative distribution function). 

\begin{table}[htb]
\centering
\caption{Dyadic intervals and their corresponding array indices.}
\label{tab:dyadic_intervals}
\renewcommand{\arraystretch}{1.4}  
\begin{tabular}{c|ccccccc}
Index & 0 & 1 & 2 & $ \cdots $ & $ n $ & $ \cdots $ & $ N $ \\ \hline
Interval & $ [\tfrac{1}{2}, 1) $ & $ [\tfrac{1}{4}, \tfrac{1}{2}) $ & $ [\tfrac{1}{8}, \tfrac{1}{4}) $ & $ \cdots $ & $ [\tfrac{1}{2^{n+1}}, \tfrac{1}{2^n}) $ & $ \cdots $ & $ (0, \tfrac{1}{2^N}) $
\end{tabular}
\end{table}

Such an implementation can handle any value in $ (0, \tfrac{1}{2}) \cup (\tfrac{1}{2}, 1) $, but unfortunately $ \tfrac{1}{2} $ is not included within this. While a mathematician may say the individual value $ \tfrac{1}{2} $ has zero measure, from a computational perspective such a value is perfectly feasible input, and quite likely to appear in any software tests. Thus our implementation will correctly handle this value. The reader will notice that when we reflect an input value $ x > \tfrac{1}{2} $ about $ \tfrac{1}{2} $ by using $ x \to 1 - x $, then the value $ \tfrac{1}{2} $ will not be in any of the intervals indexed between 1 and $ N $ in table~\ref{tab:dyadic_intervals}, but remains in the interval indexed by 0. Thus in our later implementations, the arrays of coefficients will always hold as their first entry (index 0) zero values ($ \Phi^{-1}(\tfrac{1}{2}) = 0 $) so $ \tfrac{1}{2} $ is correctly handled. We found during the development of the implementation, being able to correctly handle this value is of considerable practical importance, so we encourage practitioners to also correctly handle this value. 

If we are using the dyadic decay rate, then the interval an input $ x $  belongs to is $ \lceil -{\log}_2(x) \rceil - 1 $. However, to compute this we need not take any logarithms, which are expensive to compute. We can obtain the same result by reading off the exponent bits in the floating point representation, treating these as an integer, and then correcting for the exponent bias.  This only involves simple bit manipulations, and interpreting the bits in a floating point representation as an integer. In C this is a technique called type punning, and can be achieved either by pointer aliasing, using a \texttt{union}, or using specialised intrinsic functions. Of these, pointer aliasing technically breaks the strict aliasing rule in C (and C++) \citep[6.5.2.3]{iso2012c11} \citep[pages~163--164]{stallman2020gcc}. Similarly, type punning with a \texttt{union} in C89 is implementation defined, whereas in C11 the bits are re-interpreted as desired. In the presented implementation we will leave this detail undefined and just use a macro to indicate the type punning. (The implementation approximating the Gaussian distribution uses pointer aliasing, whereas the later approximation to the non central $ \chi^2 $ distribution in section~\ref{sec:the_non_central_chi_squared_distribution} uses a \texttt{union}, to demonstrate both possibilities \citep{sheridan2020approximate_random}).

\begin{algorithm}[h!tb]
\DontPrintSemicolon
\KwIn{Floating point uniform random variable $ U \in (0, 1) $.}
\KwOut{Floating point approximate Gaussian random variable $ \widetilde{Z} $.}
Form predicate using $ U > \tfrac{1}{2} $.\;
Reflect about $ \tfrac{1}{2} $ to obtain $ U \in (0, \tfrac{1}{2}] $.\;
Interpret $ U $ as an unsigned integer.\;
Read the exponent bits using bit wise AND.\;
Right shift away the mantissa's bits.\;
Obtain an array index by correcting for exponent bias.\;
Cap the array index to avoid overflow.\;
Read the polynomial's coefficients.\;
Re-interpret $ U $ as a float.\;
Form the polynomial approximation $ \widetilde{Z} $.\;
Correct sign of approximation based on the predicate.\;
\caption{Piecewise polynomial approximation using dyadic intervals.}
\label{algo:piecewise_polynomial_approximation_using_dyadic_intervals}
\end{algorithm}

Overall then, the general construction from theorem~\ref{thm:piecewise_linear_approximation_error} is given in algorithm~\ref{algo:piecewise_polynomial_approximation_using_dyadic_intervals}. Furthermore, a single precision C implementation is shown in code~\ref{code:piecewise_linear_approximation}, where we assume the single precision floats are stored in IEEE 32 bit floating point format \citep{ieee2008ieee}. 

\begin{lstfloat}[h!tb]
\begin{lstlisting}[style=C, caption={C implementation of the piecewise linear approximation.}, label={code:piecewise_linear_approximation}]
typedef unsigned int uint32;  // Assuming 32 bit floats. 
typedef float float32;        // Assuming 32 bit integers. 

#define TABLE_SIZE 16
#define N_MANTISSA_32 23            // For IEEE 754
#define FLOAT32_EXPONENT_BIAS 127   // For IEEE 754
#define FLOAT32_EXPONENT_BIAS_TABLE_OFFSET (FLOAT32_EXPONENT_BIAS - 1)
#define TABLE_MAX_INDEX (TABLE_SIZE - 1) // Zero indexing.
#define FLOAT32_AS_UINT32(x) (...)       // Type punning.

const float32 poly_coef_0[TABLE_SIZE] = {0.0, -1.3, -1.6, ..., -4.0, -4.1, -4.5};
const float32 poly_coef_1[TABLE_SIZE] = {0.0, 2.6, 3.7, ..., 2800.0, 5300.0, 21000.0};

#pragma omp declare simd
static inline float32 polynomial_approximation(const float32 u, const uint32 index) {
    return poly_coef_0[index] + poly_coef_1[index] * u;
}

#pragma omp declare simd
static inline uint32 get_table_index_from_float_format(const float32 u) {
    uint32 index = FLOAT32_AS_UINT32(u) >> N_MANTISSA_32;      // Remove the mantissa.
    index = FLOAT32_EXPONENT_BIAS_TABLE_OFFSET - index;        // Get the index.
    return index > TABLE_MAX_INDEX ? TABLE_MAX_INDEX : index;  // Avoid overflow.
}

void piecewise_polynomial_approximation(const unsigned int n_samples,
                                        const float32 * restrict input, 
                                        float32 * restrict output) {
    #pragma omp simd 
    for (unsigned int i = 0; i < n_samples; i++) {
        float32 u = input[i];
        bool predicate = u < 0.5;
        u = predicate ? u : 1.0 - u;
        uint32 index = get_table_index_from_float_format(u);
        float32 z = polynomial_approximation(u, index);
        z = predicate ? z : -z;
        output[i] = z;
    }
}
\end{lstlisting}
\end{lstfloat}

The reason why we decide to implement the approximation in single precision using coefficient arrays of 16 entries is because each requires only 512 bits ($ 16 \times 32 $ bits) to store all the possible values for a given monomial's coefficient. The significance of 512 bits cannot be overstated, as it is the width of an AVX-512 and A64FX vector register. Thus, instead of querying the cache to retrieve the coefficients, they can be held in vector registers, bypassing the cache entirely, and achieving extremely fast speeds. Recognising the coefficients can be stored in a single coalesced vector register is currently largely beyond most compilers' capabilities using OpenMP directives and compiler flags alone. However, in the repository by \citet{sheridan2020approximate_random}, specialised implementations using Intel vector intrinsics and Arm inline assembly code achieve this, obtaining the ultimate in performance. 

\subsection{Performance of the implementations}

Both the piecewise constant and linear implementations in codes~\ref{code:piecewise_constant_approximation} and \ref{code:piecewise_linear_approximation} are non branching, vector capable, and use only basic arithmetic and bit wise operations, and thus we anticipate their performance should be exceptionally good. Indeed, their performance, along with several other implementations', is shown in table~\ref{tab:implementation_times}, with experiments performed on an Intel Skylake Xeon Gold CPU and an Arm based Cavium ThunderX2 \citep{sheridan2020approximate_random}.

\begin{table}[htb]
\centering
\caption{Performance of various approximations and implementations of the inverse Gaussian cumulative distribution function, with performance measured as clock cycles per random number.}
\label{tab:implementation_times}
\begin{tabular}{lccccc}
Description & Implementation & Hardware & Compiler & Precision &  Clock cycles\\ 
\hline
Cephes  \citep{moshier1992cephes} & --- &  Intel & \texttt{icc} & Double & $ 60 \pm 1 $ \\
GNU GSL & --- &  Intel & \texttt{icc} & Double & $ 52 \pm 10 $ \\
ASA241  \citep{wichura1988algorithm,burkardt2020software} & --- &  Intel & \texttt{icc} & Single & $ 47 \pm 1 $ \\
\citet{giles2011approximating} & --- & Intel & \texttt{icc} & Single & $ 46 \pm 2 $ \\
Intel (HA) &  MKL VSL &  Intel & \texttt{icc} & Double &  $ 9 \pm 0.5 $ \\
Intel (LA)   & MKL VSL &  Intel & \texttt{icc} & Double &   $ 7 \pm 0.5 $ \\
Intel (HA) &  MKL VSL &  Intel & \texttt{icc} & Single &  $ 3.4 \pm 0.1 $ \\
Intel (LA)   & MKL VSL &  Intel & \texttt{icc} & Single &   $ 2.6 \pm 0.1 $ \\
Piecewise constant  & OpenMP & Arm & \texttt{armclang} & Double & $ 4.0 \pm 0.5 $ \\
Piecewise constant  & OpenMP & Intel & \texttt{icc} & Double & $ 1.5 \pm 0.3 $ \\
Piecewise cubic  & OpenMP &   Intel & \texttt{icc} & Single &   $ 0.9 \pm 0.1 $  \\
Piecewise cubic    &  Intrinsics  &   Intel & \texttt{icc} & Single &  $ 0.7 \pm 0.1 $ \\
Piecewise linear&  Intrinsics  &   Intel & \texttt{icc} & Single &   $ 0.5  \pm 0.1 $ \\
Read and write & --- & Intel & $ \texttt{icc} $ & Single & $ 0.4 \pm 0.1 $ 
\end{tabular}
\end{table}

Looking at the results from table~\ref{tab:implementation_times}, of all of these, the piecewise linear implementation using Intel vector intrinsics achieves the very fastest speeds, closely approaching the maximum speed of just reading and writing. Unsurprisingly, the freely available implementations from Cephes and GSL are not competitive with the commercial offerings from Intel. Nonetheless, even in single precision, our approximations, on Intel hardware, consistently beat the performance achieved from the Intel high accuracy (HA) or low accuracy (LA) offerings. Comparing the high accuracy Intel offering and the piecewise linear implementation, there stands to be a speed up by a factor of seven by switching to our approximation. These results vindicate our efforts, and that our simple approximations offer considerable speed improvements. It is also needless to say, that compared to the freely available open source offerings, the savings become even more significant. 

\section{Multilevel Monte Carlo}
\label{sec:multilevel_monte_carlo}

One of the core use cases for our high speed approximate random variables is in Monte Carlo applications. Frequently, Monte Carlo is used to estimate expectations of the form $ \mathbb{E}(P) $ of functionals $ P $ which act on solutions $ X $ of stochastic differential equations of the form $ \dd{X_t} = a(t, X_t) \dd{t} + b(t, X_t)\dd{W_t} $ for given drift and diffusion processes $ a $ and $ b $. The underlying stochastic process $ X_t $ is itself usually approximated by some $ \widehat{X} $ using a numerical method, such as the Euler-Maruyama or Milstein schemes \citep{asmussen2007stochastic,kloeden1999numerical,lord2014introduction}. These approximation schemes simulate the stochastic process from time $ t = 0 $ to $ t = T $ over $ N $ time steps of size $ \Delta t = \delta = \tfrac{T}{N} $, where the incremental update at the $ n $-th iteration requires a Gaussian random variable $ Z_n $ to simulate the increment to the underlying Wiener process $ W_t $, where $ \Delta W_n = \sqrt{\delta}Z_n $. Such types of Monte Carlo simulations are widespread, with the most famous application being to price financial options. 

Approximate random variables come into this picture by substituting the exact random variable samples in the numerical scheme with approximate ones. This facilitates running faster simulations, at the detriment of introducing error. However, using the multilevel Monte Carlo method \citep{giles2008multilevel}, this error can be compensated for with negligible cost. Thus, the speed improvements offered by switching to approximate random variables can be largely recovered, and the original accuracy can be maintained. A detailed inspection of the error introduced from incorporating approximate Gaussian random variables into the Euler-Maruyama scheme and the associated multilevel Monte Carlo analysis is presented by \citeauthor{giles2020approximate} \citep{giles2020approximate,sheridan2020nested}. As such, we will only briefly review the key points of the setup, and focus on detailing the resultant computational savings that can be expected from using approximate random variables. 

For the Euler-Maruyama scheme, the unmodified version using exact Gaussian random variables $ Z $ produces an approximation $ \widehat{X} $, whereas the modified scheme using approximate random variables $ \widetilde{Z} $ produces an approximation $ \widetilde{X} $, where the two schemes are respectively
\begin{equation*}
\widehat{X}_{n+1} = \widehat{X}_n + a(t_n, \widehat{X}_n) \delta + b(t_n, \widehat{X}_n)\sqrt{\delta} Z_n
\qquad \text{and} \qquad 
\widetilde{X}_{n+1} = \widetilde{X}_n + a(t_n, \widetilde{X}_n) \delta + b(t_n, \widetilde{X}_n)\sqrt{\delta} \widetilde{Z}_n,
\end{equation*}
where $ t_n \coloneqq n \delta $. 

The regular multilevel Monte Carlo construction varies the discretisation between two levels, producing \emph{fine} and \emph{coarse} simulations. The functional $ P $ would act on each path simulation, producing the fine and coarse approximations $ P^{\mathrm{f}} $ and $ P^{\mathrm{c}} $ respectively. If the path simulation uses the exact random variables we denote these as $ \widehat{P}^{\mathrm{f}} $ and $ \widehat{P}^{\mathrm{c}} $, and alternatively if it uses approximate random variables as $ \widetilde{P}^{\mathrm{f}} $ and $ \widetilde{P}^{\mathrm{c}} $. In general there may be multiple tiers of fine and coarse levels, so we use $ l $ to index these, where increasing values of $ l $ correspond to finer path simulations. Thus for a given $ l $ we have $ \widehat{P}_l \equiv \widehat{P}^{\mathrm{f}} $ and $ \widehat{P}_{l-1} \equiv \widehat{P}^{\mathrm{c}} $, and similarly $ \widetilde{P}_l \equiv \widetilde{P}^{\mathrm{f}} $ and $ \widetilde{P}_{l-1} \equiv \widetilde{P}^{\mathrm{c}} $. If we have levels $ l \in \{0, 1, 2, \ldots, L\} $ and use the convention $ \widehat{P}_{-1} \coloneqq \widetilde{P}_{-1} \coloneqq 0 $, then \citet{giles2020approximate} suggest the nested multilevel Monte Carlo
\begin{equation*}
E(P) 
\approx
E(\widehat{P}_L) 
= 
\sum_{l = 0}^{L} E(\widehat{P}_l - \widehat{P}_{l-1}) 
= 
\sum_{l = 0}^{L} E(\widetilde{P}_l - \widetilde{P}_{l-1}) +  E(\widehat{P}_l - \widehat{P}_{l-1} - \widetilde{P}_l + \widetilde{P}_{l-1}),
\end{equation*}
where the first approximation is the regular Monte Carlo procedure \citep{glasserman2013monte}, the first equality is the usual multilevel Monte Carlo decomposition \citep{giles2008multilevel}, and the final equality is the nested multilevel Monte Carlo framework \citep{giles2020approximate,sheridan2020nested}.
\citeauthor{giles2020approximate} \citep{giles2020approximate,sheridan2020nested} show that the two way differences in the regular and nested multilevel Monte Carlo settings behave almost identically, and the main result of their analysis is determining the behaviour of the final four way difference's variance \citep[lemmas~4.10 and 4.11]{giles2020approximate} \citep[corollaries~6.2.6.2 and 6.2.6.3]{sheridan2020nested}. They find that for Lipschitz continuous and differentiable functionals
\begin{equation*}
\lVert \widehat{P}^{\mathrm{f}} - \widehat{P}^{\mathrm{c}} - \widetilde{P}^{\mathrm{f}} + \widetilde{P}^{\mathrm{c}}\rVert_p 
\leq O(\delta^{1/2} \lVert Z - \widetilde{Z} \rVert_{p'}) 
\end{equation*}
for any  $ p' $ such that $ 2 \leq p < p' < \infty $, and that for Lipschitz continuous but non-differentiable functionals that 
\begin{equation*}
\lVert \widehat{P}^{\mathrm{f}} - \widehat{P}^{\mathrm{c}} - \widetilde{P}^{\mathrm{f}} + \widetilde{P}^{\mathrm{c}}\rVert_p 
\leq O(\min\{
\delta^{1/2} \lVert Z - \widetilde{Z} \rVert_{p'}^{p'(1-\epsilon)/(p'+1)},
\delta^{(1-\epsilon)/2p -1/2p'}    \lVert Z - \widetilde{Z} \rVert_{p'}
\})
\end{equation*}
for any  $ \epsilon > 0 $. We can see that in all circumstances covered by their analysis that there is a dependence on the approximation error $  \lVert Z - \widetilde{Z} \rVert_{p'} $ for some $ L^{p'} $ norm where $ p' > 2 $.

\subsection{Expected time savings}

The regular multilevel estimator $ \hat{\theta} $ is
\begin{equation*}
  \hat{\theta}  \coloneqq \sum_{l=0}^{L} \dfrac{1}{\widehat{m}_l} \sum_{i=1}^{\widehat{m}_l} \left( \widehat{P}^{(i,l)}_l - \widehat{P}^{(i,l)}_{l-1}\right),
\end{equation*}
and the nested multilevel estimator $ \tilde{\theta} $ is
\begin{equation*}
\tilde{\theta} \coloneqq \sum_{l=0}^L \dfrac{1}{\widetilde{m}_l} \sum_{i=1}^{\widetilde{m}_l}\left( \widetilde{P}^{(i,l)}_l - \widetilde{P}^{(i,l)}_{l-1}\right) + \dfrac{1}{\widetilde{M}_l} \sum_{i=1}^{\widetilde{M}_l} \left( \widehat{P}^{(i,l+L+1)}_l - \widehat{P}^{(i,l+L+1)}_{l-1} - \widetilde{P}^{(i,l+L+1)}_l + \widetilde{P}^{(i,l+L+1)}_{l-1}\right),
\end{equation*}
where $ \widehat{m}_l $, $ \widetilde{m}_l $, and $ \widetilde{M}_l $ are the number of paths generated, each with a computational time cost of $ \hat{c}_l $, $ \tilde{c}_l $, and $ \widetilde{C}_l $, and variance $ \hat{v}_l $, $ \tilde{v}_l $, and $ \widetilde{V}_l $ respectively, and all terms with the same suffix $(i,l)$ are computed using the same uniform random numbers. Each of these estimators will have an error due to the finite number of paths used and the approximation scheme employed. The total error arising from these two factors is commonly referred to as the variance bias trade-off, where the mean squared error (MSE) of an estimator $ \theta \in \{\hat{\theta}, \tilde{\theta}\} $ is given by $ \text{MSE}(\theta) = \mathbb{V}(\theta) + \text{Bias}^2(\theta)$ \citep[page~16]{glasserman2013monte}. Setting the desired MSE to $ \varepsilon^2 $ and choosing the maximum simulation fidelity such that we just satisfy $ \text{Bias}^2(\theta) \leq \tfrac{\varepsilon^2}{2} $, then we can derive an expression for the total computational time $ T $. Forcing $ T $ to be minimal is achieved by performing a constrained minimisation with an objective function $ \mathscr{F} $. Considering the estimator $ \hat{\theta} $, the corresponding objective function is $ \widehat{\mathscr{F}} \coloneqq \sum_{l=0}^{L} \widehat{m}_l \hat{c}_l + \mu (\sum_{l=0}^{L} \tfrac{\hat{v}_l}{\widehat{m}_l} - \tfrac{\varepsilon^2}{2}) $, where $ \mu $ is a Lagrange multiplier enforcing the constraint $ \mathbb{V}(\hat{\theta}) = \tfrac{\varepsilon^2}{2} $. Treating the number of paths as a continuous variable this is readily minimised to give
\begin{equation*}
\widehat{T} = 2\varepsilon^{-2}\left(\sum_{l=0}^L \sqrt{\hat{v}_l \hat{c}_l}\right)^2 
\qquad \text{and} \qquad 
\widetilde{T} = 2\varepsilon^{-2} \left(\sum_{l=0}^L \sqrt{\tilde{v}_l \tilde{c}_l} + \sqrt{\widetilde{V}_l \widetilde{C}_l}\right)^2,
\end{equation*}
where the minimal number of paths required are given by
\begin{equation*}
\widehat{m}_l = \varepsilon^{-1} \sqrt{\dfrac{2\widehat{T}\hat{v}_l}{\hat{c}_l}},
\qquad 
\widetilde{m}_l  = \varepsilon^{-1} \sqrt{\dfrac{2\widetilde{T} \tilde{v}_l}{\tilde{c}_l}}, 
\qquad 
\text{and}
\qquad 
\widetilde{M}_l  = \varepsilon^{-1} \sqrt{\dfrac{2\widetilde{T} \widetilde{V}_l}{\widetilde{C}_l}},
\end{equation*}
and hence an overall saving of
\begin{equation*}
\widetilde{T} 
\approx 2\varepsilon^{-2} \left(\sum_{l=0}^L \sqrt{\hat{v}_l \hat{c}_l} \left( \sqrt{\dfrac{\tilde{v}_l\tilde{c}_l}{\hat{v}_l\hat{c}_l}} + \sqrt{\dfrac{\widetilde{V}_l \widetilde{C}_l}{\hat{v}_l \hat{c}_l}}\right)\right)^2 
\leq \widehat{T} \max_{l \leq L} \left\{ \dfrac{\tilde{v}_l\tilde{c}_l}{\hat{v}_l\hat{c}_l} \left(1 + \sqrt{\dfrac{\widetilde{V}_l \widetilde{C}_l}{\tilde{v}_l \tilde{c}_l}}\right)^2\right\}.
\end{equation*}
For modest fidelity approximations where $ \tilde{v}_l \approx \hat{v}_l $, the term $ \tfrac{\tilde{v}_l\tilde{c}_l}{\hat{v}_l\hat{c}_l} \approx \tfrac{\tilde{c}_l}{\hat{c}_l}$ measures the potential time savings, and the term $ (1 + (\widetilde{V}_l \widetilde{C}_l / \tilde{v}_l \tilde{c}_l)^{1/2})^2 $ assesses the efficiency of realising these savings. A balance needs to be achieved, and the approximations should be sufficiently fast so there is the potential for large savings, but of a sufficient fidelity so the variance of the expensive four way difference is considerably lower than the variance of the cheaper two way difference.

Although we have estimates for the costs of each estimator from table~\ref{tab:implementation_times}, the variance will depend on the stochastic process being simulated, the numerical method being used, and the approximation employed. To make our estimates more concrete and quantify the possible variance reductions, we consider a geometric Brownian motion where $ a(t, X_t) \coloneqq \mu X_t $ and $ b(t, X_t) \coloneqq \sigma X_t $ for two strictly positive constants $ \mu $ and $ \sigma $, where we take $ \mu = 0.05 $, $ \sigma = 0.2 $, $ X_0 = 1 $, and $ T = 1 $ \citep[6.1]{giles2008multilevel}. The coarse level's Weiner increments are formed by pair wise summing the fine level's (which uses the fine time increment $ \delta^{\mathrm{f}} $). Importantly, the uniform random variable samples producing the exact Gaussian random variables will be the same as those used for producing the approximate Gaussian random variables, ensuring a tight coupling. Thus, for some approximation $ \widetilde{\Phi}^{-1} \approx \Phi^{-1} $, such as those from section~\ref{sec:approximate_gaussian_random_variables}, for a single uniform sample $ U_n \sim \mathcal{U}(0, 1) $, then $ Z_n \coloneqq \Phi^{-1}(U_n) $ and $ \widetilde{Z}_n \coloneqq \widetilde{\Phi}^{-1}(U_n) $, where the $ U_n $ is the same for both of these.

The variance for the various multilevel terms for different time increments for the Euler-Maruyama and Milstein schemes are shown in figure~\ref{fig:variance_reduction} for various approximations. We consider the underlying process itself, corresponding to the functional $ P(X) = X $. The piecewise constant approximation uses 1024 intervals, which from figure~\ref{fig:piecewise_constant_gaussian_approximation_error} has an RMSE of approximately $ 10^{-2} $. The piecewise linear approximation uses 15 dyadic intervals (16 coefficients stored to correctly handle $ \tfrac{1}{2} $), which from figure~\ref{fig:piecewise_linear_gaussian_approximation_error} has a similar RMSE of approximately $ 10^{-2} $. The piecewise cubic approximation uses the same number of intervals, which from figure~\ref{fig:piecewise_linear_gaussian_approximation_error} has an RMSE of approximately $ 10^{-4} $. We also include the approximation using Rademacher random variables for comparison.

\begin{figure}[htb]
\centering

\hfil
\subfigure[The Euler-Maruyama scheme.\label{fig:variance_reduction_euler_maruyama_scheme}]{\includegraphics{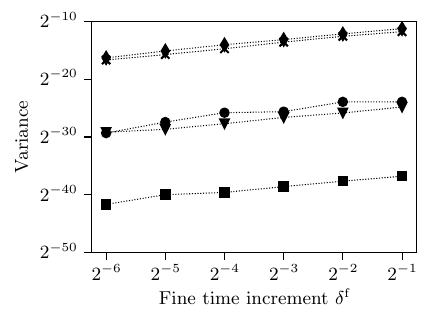}}\hfil 
\subfigure[The Milstein scheme.\label{fig:variance_reduction_milstein_scheme}]{\includegraphics{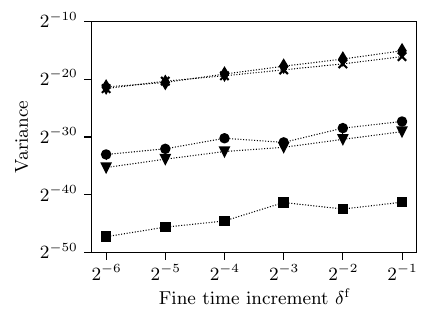}}\hfil

\caption{The variances from using the Euler-Maruyama and Milstein schemes for a geometric Brownian motion for the functional $ P(X) = X $. The ($ \blacklozenge $)-marker is the two way difference $ \widehat{P}^{\mathrm{f}} - \widehat{P}^{\mathrm{c}} $, and the remaining markers ($ \bm{\times} $, \raisebox{-0.1em}{\huge$ \bullet$}, {\large $ \blacktriangledown $}, $ \blacksquare $) are the four way difference $ \widehat{P}^{\mathrm{f}} - \widehat{P}^{\mathrm{c}} - \widetilde{P}^{\mathrm{f}} + \widetilde{P}^{\mathrm{c}} $ for various approximations. ($ \bm{\times} $) Rademacher. (\raisebox{-0.1em}{\huge$ \bullet$}) Piecewise constant. ({\large $ \blacktriangledown $}) Piecewise linear. ($ \blacksquare $) Piecewise cubic.}
\label{fig:variance_reduction}

\end{figure}

We can see from figure~\ref{fig:variance_reduction} that the two way differences exhibit the usual strong convergence order $ \tfrac{1}{2} $ and 1 of the Euler-Maruyama and Milstein schemes, as expected \citep{kloeden1999numerical}. Furthermore, as the functional is differentiable and Lipschitz continuous, this strong convergence rate is preserved for the four way differences \citep{giles2020approximate,sheridan2020nested}. \citeauthor{giles2020approximate} \citep{giles2020approximate,sheridan2020nested} derive this result for the Euler-Maruyama scheme, but the analysis for the Milstein scheme remains an open problem. Aside from this, the key point to note from figure~\ref{fig:variance_reduction} is the substantial drop in the variance between the two way and four way variances using our approximations. This drop in the variance is large for the piecewise constant and linear approximations, and huge for the piecewise cubic approximation. The approximation using Rademacher random variables has only a very small drop in variance. 

Estimating the best cost savings seen compared to Intel (HA) from table~\ref{tab:implementation_times}, the variance reductions from figure~\ref{fig:variance_reduction_euler_maruyama_scheme}, and using the simplifications $ \widetilde{C}_l \approx \hat{c}_l + \tilde{c}_l $ and $ \tilde{v}_l \approx  \hat{v}_l $, then the estimated speed ups and their efficiencies are shown in table~\ref{tab:savings}. For the Rademacher random variables we use the optimistic  approximations that this achieves the maximum speed set by reading and writing and that it offers a variance reduction of $ 2^{-1} $. We can see from this that the piecewise linear approximation would give the largest savings, although the savings from the piecewise constant and cubic approximations are quite similar. Notice that while the piecewise cubic is achieving near perfect efficiency, its cost savings are not substantial enough to beat the marginally less efficient piecewise linear approximation which offers the larger potential savings. For all of our approximations the four way difference simulations are required very infrequently. Lastly, while the Rademacher random variables may offer the best time savings, the absence of any considerable variance reduction caused by the extremely low fidelity approximation results in a very inefficient multilevel scheme, demonstrating the balance required between speed and fidelity. 

\begin{table}[htb]
\centering
\caption{The cost savings, variance reductions, and possible speed ups from approximating the Gaussian distribution.}
\label{tab:savings}
\begin{tabular}{lcclrcr@{}l}
Approximation  & $ {\log}_2 \left(\tfrac{\widetilde{V}_l}{\hat{v}_l}\right) $ & $ \tfrac{\hat{c}_l}{\tilde{c}_l} $ & \multicolumn{2}{c}{Speed up} & $ \tfrac{\widetilde{m}_l}{\widehat{m}_l} $ & \multicolumn{2}{c}{$ \tfrac{\widetilde{m}_l}{\widetilde{M}_l} $} \\[0.5em]
\hline
Rademacher & $ -1 $ & 9 & 0.86 & (9.5\%) & 3.24 & 1&.4 \\
Piecewise constant & $ -13 $ & 6 & 5.66 & (94.4\%) & 1.03 & 240& \\
Piecewise linear  & $ -14 $ & 7& 6.70 & (95.7\%) & 1.02 & 360 &\\
Piecewise cubic  & $ -25 $ & 5 & 5.00 & (99.9\%)& 1.00 & 14000 & 
\end{tabular}
\end{table}

These cost savings are idealised in the respect that we have attributed the cost entirely to the generation of the random numbers. While this is quite a common assumption, the validity of this assumption will diminish the faster the approximations become, as the basic cost of the other arithmetic operations becomes significant. Thus, while a practitioner should have their ambitions set to achieve these savings, they should set more modest expectations.

\section{The non central \texorpdfstring{$ \bm{\chi^2} $}{chi-squared} distribution}
\label{sec:the_non_central_chi_squared_distribution}

A second distribution of considerable practical and theoretical interest is the non-central $ \chi^2 $ distribution, which regularly arises from the Cox-Ingersoll-Ross (CIR) interest rate model \citep{cox1985theory} (and the Heston model \citep{heston1993closed}). The distribution is parametrised as $ \chi^2_\nu(\lambda) $, where $ \nu > 0 $ denotes the degrees of freedom and $ \lambda \geq 0 $ the non-centrality parameter, where we denote the inverse cumulative distribution function as $ C^{-1}_\nu(\cdot; \lambda) $. Having a parametrised distribution naturally increases the complexity of any implementation, exact or approximate, and makes the distribution considerably more expensive to compute. 

To gauge the procedure used, the associated function in Python's SciPy package (\texttt{ncx2.ppf}) calls the Fortran routine \texttt{CDFCHN} from the CDFLIB library by \citet{brown1994dcdflib} (C implementations available \citep{burkardt2020cdflib}). This computes the value by root finding  \citep[algorithm~R]{bus1975two} on the offset cumulative distribution function $ C_\nu(\cdot;\lambda) $, where $ C_\nu(\cdot;\lambda) $ is itself computed by a complicated series expansion \citep[(26.4.25)]{abramowitz1948handbook} involving the cumulative distribution function for the central $ \chi^2 $ distribution. Overall, there are many stages involved, and as remarked by \citet[\texttt{cdflib.c}]{burkardt2020cdflib}: \textit{``Very large values of
[$ \lambda $] can consume immense computer resources''}. The analogous function \texttt{ncx2inv} in MATLAB, from its statistics and machine learning toolbox, appears to follow a similar approach based on its description \citep[page~4301]{matlab2018statistics}. To indicate the costs, on an Intel core i7-4870HQ CPU the ratios between sampling from the non-central $ \chi^2 $ distribution and the Gaussian distribution (\texttt{norm.ppf} and \texttt{norminv} in Python and MATLAB respectively) are shown in table~\ref{tab:non_central_chi_2_times}, from which it is clear that the non-central $ \chi^2 $ distribution can be vastly more expensive than the Gaussian distribution. 

\begin{table}[h!tb]
\centering    
\caption{The computing ratio between the non-central $ \chi^2 $ and Gaussian distributions.}
\label{tab:non_central_chi_2_times}

\hfil
\subfigure[Python.\label{tab:non_central_chi_2_times_python}]{
\begin{tabular}{|r|rrrrr|}
\multicolumn{1}{c}{\multirow{2}{*}{$ \lambda $}} & \multicolumn{5}{c}{$ \nu $} \\
\cline{2-6}
\multicolumn{1}{c|}{} & 1 &   5  &  10 &  50  & 100 \\
\hline
1    &  37 &  36 &  40 &  54 &  73\\
5    &  40 &  46 &  48 &  62 &  85\\
10   &  54 &  56 &  63 &  69 &  97\\
50   & 101 & 103 & 103 & 144 & 143\\
100  & 191 & 190 & 192 & 189 & 185\\
200  & 243 & 246 & 240 & 233 & 221\\
500  & 465 & 474 & 465 & 446 & 416\\
1000 & 459 & 458 & 455 & 471 & 474 \\
\hline
\end{tabular}
}\hfil 
\subfigure[MATLAB.\label{tab:non_central_chi_2_times_matlab}]{
\begin{tabular}{|r|rrrrr|}
\multicolumn{1}{c}{\multirow{2}{*}{$ \lambda $}} & \multicolumn{5}{c}{$ \nu $} \\
\cline{2-6}
\multicolumn{1}{c|}{} & 1 &   5  &  10 &  50  & 100 \\
\hline
1& 168 & 214 & 259 & 456 & 294  \\ 
5& 651 & 782 & 840 & 1510 & 2046  \\ 
10& 935 & 1086 & 1050 & 1838 & 2496  \\ 
50& 3000 & 2969 & 2562 & 4118 & 5333  \\ 
100& 4929 & 3461 & 5039 & 6046 & 6299  \\ 
200& 9456 & 9603 & 10129 & 11524 & 12766  \\ 
500& 22691 & 22713 & 22702 & 23328 & 26273  \\ 
1000& 45872 & 43968 & 43807 & 44563 & 46780  \\
\hline
\end{tabular}
}\hfil \\[1em]

\caption{The computing ratio between the exact function and the piecewise linear approximation for the non-central $ \chi^2 $ distribution.}
\label{tab:implementation_times_chi}

\hfil 
\subfigure[C implementation (compared against CDFLIB).\label{tab:implementation_times_chi_c}]{
\begin{tabular}{|r|rrrrr|}
\multicolumn{1}{c}{\multirow{2}{*}{$ \lambda $}} & \multicolumn{5}{c}{$ \nu $} \\
\cline{2-6}
\multicolumn{1}{c|}{} & 1 &   5  &  10 &  50  & 100 \\
\hline
1    &    333&   412&   458&   666&   864\\
5    &    391&   447&   534&   701&   966\\
10   &    600&   668&   724&   801&   992\\
50   &   1411&  1424&  1231&  1811&  1811\\
100  &   2271&  2174&  2164&  2207&  2029\\
200  &   2539&  2624&  2791&  2304&  2113\\
500  &   5020&  4912&  4860&  4908&  4886\\
1000 &   4822&  4859&  4866&  4791&  4980\\
\hline
\end{tabular}
}\hfil
\subfigure[C++ implementation (compared against Boost).\label{tab:implementation_times_chi_cpp}]{
\begin{tabular}{|r|rrrrr|}
\multicolumn{1}{c}{\multirow{2}{*}{$ \lambda $}} & \multicolumn{5}{c}{$ \nu $} \\
\cline{2-6}
\multicolumn{1}{c|}{} & 1 &   5  &  10 &  50  & 100 \\
\hline
1    &    671&  1643&  1534&  1734&  2093\\
5    &   1884&  1831&  1733&  2037&  2344\\
10   &   1924&  1937&  1863&  2490&  2490\\
50   &   2576&  2565&  2876&  2945&  2974\\
100  &   3238&  3265&  3255&  3299&  3354\\
200  &   4382&  4384&  4373&  4356&  4333\\
500  &   5260&  5294&  5243&  5249&  5224\\
1000 &   6101&  6022&  6026&  6147&  6093\\
\hline
\end{tabular}
}\hfil\\[1em]

\caption{The RMSE of approximations $ \widetilde{C}^{-1}_{\nu}(\cdot;\lambda) $ to the non-central $ \chi^2 $ distribution.}
\label{tab:non_central_chi_2_rmse}

\hfil
\subfigure[Piecewise linear.\label{tab:non_central_chi_2_rmse_linear}]{
\begin{tabular}{|r|rrrrr|}
\multicolumn{1}{c}{\multirow{2}{*}{$ \lambda $}} & \multicolumn{5}{c}{$ \nu $} \\
\cline{2-6}
\multicolumn{1}{c|}{} & 1 &   5  &  10 &  50  & 100 \\
\hline
1    &  0.036 & 0.036 & 0.041 & 0.070 & 0.095 \\
5    &  0.045 & 0.047 & 0.050 & 0.076 & 0.100 \\
10   &  0.054 & 0.056 & 0.059 & 0.081 & 0.104 \\
50   &  0.098 & 0.099 & 0.101 & 0.116 & 0.133 \\
100  &  0.134 & 0.135 & 0.136 & 0.148 & 0.161 \\
200  &  0.186 & 0.187 & 0.188 & 0.196 & 0.207 \\
\hline
\end{tabular}
}\hfil
\subfigure[Piecewise cubic.\label{tab:non_central_chi_2_rmse_cubic}]{
\begin{tabular}{|r|rrrrr|}
\multicolumn{1}{c}{\multirow{2}{*}{$ \lambda $}} & \multicolumn{5}{c}{$ \nu $} \\
\cline{2-6}
\multicolumn{1}{c|}{} & 1 &   5  &  10 &  50  & 100 \\
\hline
1  &    0.004 & 0.005 & 0.006 & 0.007 & 0.006 \\
5  &     0.004 & 0.004 & 0.005 & 0.010 & 0.015 \\
10  &   0.006 & 0.005 & 0.005 & 0.009 & 0.014 \\
50  &    0.006 & 0.007 & 0.005 & 0.010 & 0.011 \\
100 &   0.013 & 0.008 & 0.009 & 0.011 & 0.014 \\
200 &   0.009 & 0.012 & 0.011 & 0.012 & 0.015 \\
\hline
\end{tabular}
\hfil
}\hfil\\[0em]

\hfil
\subfigure[\citeauthor{abdel1954approximate}.\label{tab:non_central_chi_2_rmse_abdel_aty}]{
\begin{tabular}{|r|rrrrr|}
\multicolumn{1}{c}{\multirow{2}{*}{$ \lambda $}} & \multicolumn{5}{c}{$ \nu $} \\
\cline{2-6}
\multicolumn{1}{c|}{} & 1 &   5  &  10 &  50  & 100 \\
\hline
1   &   0.153 & 0.041 & 0.023 & 0.009 & 0.006 \\
5   &   0.329 & 0.155 & 0.083 & 0.013 & 0.007 \\
10  &   0.385 & 0.243 & 0.156 & 0.026 & 0.011 \\
50   &  0.451 & 0.403 & 0.353 & 0.157 & 0.079 \\
100  &   0.461 & 0.435 & 0.405 & 0.250 & 0.157 \\
200  &  0.466 & 0.453 & 0.436 & 0.334 & 0.251 \\
\hline
\end{tabular}
}\hfil
\subfigure[\citeauthor{sankaran1959non}.\label{tab:non_central_chi_2_rmse_sankaran}]{
\begin{tabular}{|r|rrrrr|}
\multicolumn{1}{c}{\multirow{2}{*}{$ \lambda $}} & \multicolumn{5}{c}{$ \nu $} \\
\cline{2-6}
\multicolumn{1}{c|}{} & 1 &   5  &  10 &  50  & 100 \\
\hline
1  &       & 0.031 & 0.021 & 0.009 & 0.006 \\
5   &       & 0.058 & 0.031 & 0.010 & 0.007 \\
10  &   0.121 & 0.057 & 0.034 & 0.012 & 0.007 \\
50   &    0.030 & 0.025 & 0.020 & 0.012 & 0.009 \\
100  &  0.015 & 0.014 & 0.012 & 0.009 & 0.008 \\
200  &  0.008 & 0.007 & 0.007 & 0.006 & 0.006 \\
\hline
\end{tabular}
\hfil
}\hfil\\[0em]
\end{table}

\subsection{Approximating the non-central \texorpdfstring{$ \bm{\chi^2} $}{chi-squared} distribution}

There has been considerable research effort into quickly sampling from the non-central $ \chi^2 $ distribution by various approximating distributions \citep{johnson1995continuous,sankaran1959non,abdel1954approximate,wilson1931distribution,hoaglin1977direct,shea1991algorithm,best1975algorithm,rice1968uniform}, with the most notable being an approximation using Gaussian random variables by \citet{abdel1954approximate}, (and a similar type of approximation by \citet{sankaran1959non}). With the exception of \citet{abdel1954approximate} and \citet{sankaran1959non}, the remaining methods do not directly approximate the inverse cumulative distribution function, so do not give rise to an obvious coupling mechanism for multilevel Monte Carlo simulations, whereas our framework does. The schemes by \citet{abdel1954approximate} and \citet{sankaran1959non}, both of which require evaluating $ \Phi^{-1} $, are appropriate for comparison to our scheme. Hence, our approximation scheme, while modest in its sophistication, appears novel research in this direction.

For approximating the non-central $ \chi^2 $ distribution, we simplify our considerations by taking $ \nu $ to be fixed, and thus the distribution then only has the parameter $ \lambda $ varying. While this may appear a gross simplification, in the financial applications $ \nu $ is a model constant independent of the numerical method's parameters, and thus this simplification is appropriate.

We define the function $ P_x(\cdot;y) $ for $ x > 0 $ and $ 0 < y < 1  $ as 
\begin{equation*}
P_x(U;y) \coloneqq \sqrt{\dfrac{x}{4y}} \left( \dfrac{y}{x}  C^{-1}_{x}\left(U; \dfrac{(1 - y)x}{y}\right) - 1\right)
\qquad \text{so that} \qquad 
C^{-1}_{\nu}(U;\lambda) \equiv \lambda + \nu + 2 \sqrt{\lambda + \nu} P_\nu\left(U;\dfrac{\nu}{\lambda + \nu}\right).
\end{equation*}
This is because $ P_x(\cdot;y) $ is better scaled than $ C^{-1}_{\nu}(\cdot;\lambda) $ for the range of possible parameters, with the limits
\begin{equation*}
P_x(U;y) \xrightarrow{y\to 0} \Phi^{-1}(U) 
\qquad \text{and} \qquad 
P_x(U;y) \xrightarrow{y\to 1} \dfrac{C^{-1}_x(U)}{2\sqrt{x}} - \dfrac{\sqrt{x}}{2},
\end{equation*}
where $ C^{-1}_\nu(\cdot) $, without the parameter $ \lambda $, is the inverse cumulative distribution function of the central $ \chi^2 $ distribution. Thus to construct our approximation $ \widetilde{C}^{-1}_{\nu}(\cdot;\lambda) \approx C^{-1}_{\nu}(\cdot;\lambda) $, we first construct the piecewise polynomial approximation $ \widetilde{P}_x(\cdot;y) \approx P_x(\cdot;y) $, and then define 
$ \widetilde{C}^{-1}_{\nu}(\cdot;\lambda) \coloneqq \lambda + \nu + 2 \sqrt{\lambda + \nu} \widetilde{P}_\nu(\cdot;\tfrac{\nu}{\lambda + \nu}) $. An example piecewise linear approximation of $  \widetilde{C}^{-1}_{\nu}(\cdot;\lambda) $ using 8 intervals is shown in figure~\ref{fig:non_central_chi_squared_linear_approximation} for various values of the non-centrality parameter $ \lambda $ with the degrees of freedom fixed at $ \nu = 1 $. We can see from figure~\ref{fig:non_central_chi_squared_linear_approximation} that the fidelity of the approximation appears quite high across a range of parameter values. 

\begin{figure}[htb]
\centering

\hfil
\subfigure[Piecewise linear approximations to the non-central $ \chi^2 $ distribution with $ \nu = 1 $ and $ \lambda \in \{1,10,20\} $ using 8 intervals and 16 interpolation values.\label{fig:non_central_chi_squared_linear_approximation}]{\includegraphics{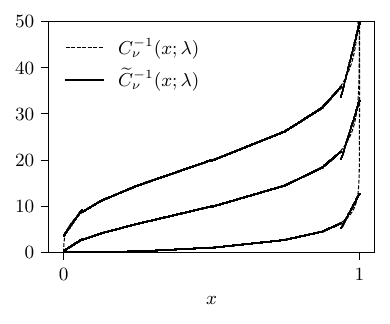}}\hfil
\subfigure[The variance of the underlying CIR process and various two way differences (using the same time increment): (\raisebox{-0.1em}{\huge$ \bullet$}) $ X_T $, ({\large $ \blacktriangledown $}) $ X_T - \widetilde{X}^{\mathrm{EM}}_T $, ($ \blacksquare $) $ X_T - \widetilde{X}^{\mathrm{linear}}_T $, and ($ \blacklozenge $) $ X_T - \widetilde{X}^{\mathrm{cubic}}_T $.\label{fig:cir_process_variance_reduction}]{\includegraphics{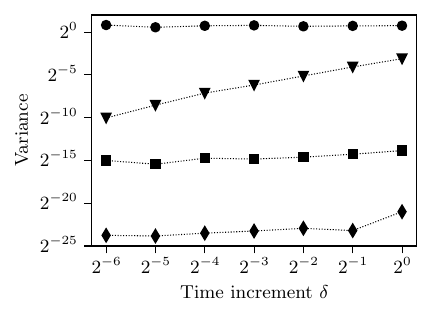}}\hfil
 
\caption{The non-central $ \chi^2 $ distribution from the CIR process, and the variance of path simulations and possible multilevel corrections.}
\label{fig:cir_process}

\end{figure}

There are only two difficulties with constructing such an approximation. The first is that the distribution is no longer anti-symmetric about $ u=1/2$, which is easily remedied by constructing two approximations: one for $ (0, \tfrac{1}{2}] $ and a second for $ (\tfrac{1}{2}, 1) $. The second difficulty is the parametrisation. Noting that $ y \in (0, 1) $ is dependent on the non-centrality parameter, we construct several approximations for various values of $ y $ (knowing the limiting cases) and linearly interpolate. A good choice of knot points are equally spaced values of $ \sqrt{y} $, where figure~\ref{fig:non_central_chi_squared_linear_approximation} uses 16 interpolation values.

The performance of C and C++ implementations are shown in table~\ref{tab:implementation_times_chi} on Intel Skylake hardware compiled with \texttt{g++}. Neither Intel, NAG, nor GSL offer an exact implementation of the inverse cumulative distribution function for the non-central $ \chi^2 $ distribution. Consequently, for our C implementation we compare against the \texttt{cdfchn} function from CDFLIB \citep{brown1994dcdflib,burkardt2020cdflib}, and for our C++ implementation the \texttt{quantile} function from the Boost library \citep{boost2020library} acting on a \texttt{non\_central\_chi\_squared} object (\texttt{boost/math/distributions}). Like the other implementations, Boost computes the inverse by a numerical inversion of the cumulative distribution function. We can see from table~\ref{tab:implementation_times_chi} that our approximation is orders of magnitude faster than the exact functions in both languages.

The fidelity of our non central $\chi^2$ approximations $ \widetilde{C}^{-1}_{\nu}(\cdot;\lambda) $ across the range of parameter values are quantified by the errors shown in table~\ref{tab:non_central_chi_2_rmse}. A piecewise linear approximation achieves a consistently good RMSE, as seen in table~\ref{tab:non_central_chi_2_rmse_linear}. Similarly, the piecewise cubic approximation demonstrates the same behaviour in table~\ref{tab:non_central_chi_2_rmse_cubic}, and has an RMSE which is typically an order of magnitude less than the piecewise linear approximation's. 

The approximations from \citet{abdel1954approximate} and \citet{sankaran1959non} are also shown in tables~\ref{tab:non_central_chi_2_rmse_abdel_aty} and \ref{tab:non_central_chi_2_rmse_sankaran} respectively. We can see from table~\ref{tab:non_central_chi_2_rmse_abdel_aty} that the approximation from \citeauthor{abdel1954approximate}, while improving for increasing $ \nu $, performs relatively poorly for large $ \lambda $. Conversely, the approximation by \citeauthor{sankaran1959non} in table~\ref{tab:non_central_chi_2_rmse_sankaran} is notably better than that by \citeauthor{abdel1954approximate} and often comparable to our piecewise cubic approximation, although demonstrating a much less consistent performance across the range of parameter values explored. 

Having compared the error coming from the approximation by \citet{sankaran1959non}, we can inspect the computational complexity of the approximation. The approximation can be expressed as
\begin{equation*}
\label{eqt:sankaran_approx}
\widetilde{C}^{-1}_{\nu}(U; \lambda) \coloneqq 
(k + \lambda) \left(1 + c_1 c_2\left(c_1 - 1 + \tfrac{1}{2}(2 - c_1) c_2 c_3\right) + c_1 \sqrt{2 c_3}\left(1 + \tfrac{1}{2}c_2 c_3\right) \Phi^{-1}(U) \right)^{{1}/{c_1}},
\end{equation*}
where $ c_1 \coloneqq 1 - \tfrac{2}{3}(k + \lambda)(k+3\lambda)(k + 2\lambda)^{-2} $, 
$ c_2 \coloneqq (k+2\lambda)(k + \lambda)^{-2} $, 
and $ c_3 \coloneqq (c_1 - 1)(1 - 3c_1) $. Just as in our polynomial approximations, it contains an identical mix of basic arithmetic operations, but a $ \Phi^{-1} $ evaluation. This can either be calculated exactly, or for improved speed it can potential be further approximated itself, but this then compounds a second level of approximation on top of the original. Furthermore, the expression contains an exponentiation by a factor of $ 1/c_1 $, which will not in general be an integer. It can be expected that such a non-integer exponentiation operation will be quite expensive. Consequently, we anticipate this would be measurably slower than our polynomial approximations. 

Contrasting the approximations between \citet{sankaran1959non} and our piecewise cubic, the piecewise cubic demonstrates a more consistent RMSE alongside a simpler computational composition, makes our piecewise polynomial the more attractive of the two. 

\subsection{Simulating the Cox-Ingersoll-Ross process}

We mentioned the non-central $ \chi^2 $ distribution arises from the Cox-Ingersoll-Ross (CIR) process: $ \dd{X_t} = \kappa (\theta - X_t) \dd{t} + \sigma \sqrt{X_t} \dd{W_t} $ for strictly positive parameters $ \kappa $, $ \theta $ and $ \sigma $. The distribution of $ X_T $ is a scaled non-central $ \chi^2 $ distribution with $ \nu = \tfrac{4\kappa\theta}{\sigma^2} $ and $ \lambda = \tfrac{4\kappa X_0 \exp(-\kappa T)}{\sigma^2(1 - \exp(-\kappa T))} $ \citep{cox1985theory} \citep[pages~67--68]{munk2011fixed}. Simulating this with the Euler-Maruyama scheme forcibly approximates the non-central $ \chi^2 $ distribution as a Gaussian distribution, and can require very finely resolved path simulations to bring the bias down to an acceptable level, as explored by \citet{broadie2006exact}. Similarly the Euler-Maruyama scheme is numerically ill posed due to the $ \sqrt{X_t} $ term, and several adaptions exist to handle this appropriately \citep{deelstra1998convergence,lord2010comparison,berkaoui2008euler,higham2002strong,alfonsi2005discretization,alfonsi2008second,alfonsi2010high,dereich2012euler,cozma2020strong_euler,gyongy2011note}, especially when the Feller condition $ 2\kappa\theta \geq \sigma^2 $ is not satisfied \citep{feller1951two,gyongy1998note}. The Feller condition is significant because \citep[page~391]{cox1985theory}: ``\textit{$ [X_t] $} can reach zero if $ \sigma^2 > 2\kappa \theta $. If $ 2 \kappa \theta \geq \sigma^2 $, the upward drift is sufficiently large to make the origin inaccessible [see \citet{feller1951two}]''.

Rather than approximating the non-central $ \chi^2 $ distribution with an exact Gaussian distribution when using the Euler-Maruyama scheme, we propose using approximate non-central $ \chi^2 $ random variables, such as those from a piecewise linear approximation. This has the benefit of offering vast time savings, whilst introducing far less bias than the Euler-Maruyama scheme. The piecewise linear approximation is on average hundreds of times faster than the exact function, as was seen in table~\ref{tab:non_central_chi_2_times}, giving vast savings, while still achieving a high fidelity, as was seen from table~\ref{tab:non_central_chi_2_rmse}.

We can generate path simulations of the CIR process using the exact non-central $ \chi^2 $ distribution, piecewise linear and cubic approximations, and the truncated Euler-Maruyama scheme from \citet{higham2002strong} (where $ \sqrt{X_n} \to \sqrt{\lvert X_n \rvert} $), where we set $ \kappa= \tfrac{1}{2}$ and  $ \theta = \sigma =T = X_0 = 1 $ (just satisfying the Feller condition\footnote{In this case $ \nu = 2 $ for which there is a finite and non zero probability density at zero.}). These respectively give rise to the exact underlying process $ X_T $ and the three approximations $ \widetilde{X}^{\mathrm{linear}}_T $, $ \widetilde{X}^{\mathrm{cubic}}_T $, and 
$ \widetilde{X}^{\mathrm{EM}}_T $.
The variances of the underlying process and the differences between this and the approximations are shown in figure~\ref{fig:cir_process_variance_reduction}. Note that the difference terms are evaluated using the same time increments, and there is no mixing of coarse and fine paths in these two-way differences. The results of figure~\ref{fig:cir_process_variance_reduction} do not change if we instead satisfy the Feller condition with strict inequality by setting e.g.\ $ \kappa = \tfrac{3}{2} $.

From figure~\ref{fig:cir_process_variance_reduction} we can see the exact underlying process' variance does not vary with the discretisation, which is to be expected as the simulation is from the exact distribution, and thus has no bias. We can see that the Euler-Maruyama approximation to the process exhibits a strong convergence order $ \tfrac{1}{2} $, as expected \citep{higham2002strong,gyongy1998note}. For the piecewise linear approximation, as the fidelity will not vary with the discretisation, then subsequently the variance also does not vary. Furthermore, given the high fidelity seen in table~\ref{tab:non_central_chi_2_rmse_linear}, the drop in the variance is approximately $ 2^{-15} $. Similarly, the even higher fidelity cubic approximation demonstrates the same behaviour in table~\ref{tab:non_central_chi_2_rmse_cubic}, but with a drop in variance of $ 2^{-25} $. These findings demonstrate that the most substantial variance reductions come from using our piecewise polynomial approximations to the non central $ \chi^2 $ distribution's inverse cumulative distribution function, rather than using the Gaussian distribution with the Euler-Maruyama scheme. 

We can repeat these same variance comparisons with a different functional of the terminal value which is path dependent. A common example in financial simulations to consider the arithmetic average of the path, (an \textit{Asian option} in mathematical finance \citep{glasserman2013monte}). Doing so produces an exactly identical variance structure as seen in figure~\ref{fig:cir_process_variance_reduction}. From this we conclude that our findings seen and explained thus far appear to carry over immediately to simulations requiring an entire path simulation, not only a terminal value, justifying the use of our approximations in time marching schemes.

Quantifying the savings that can be expected, we conservatively approximate the cost savings for both the linear and cubic approximations as a factor of 300 using table~\ref{tab:implementation_times_chi}. Coupling this with the variance reductions seen in figure~\ref{fig:cir_process_variance_reduction}, the anticipated cost savings are shown in table~\ref{tab:savings_chi}. We can see from this that both offer impressive time savings, and the piecewise cubic again achieves a near perfect efficiency. However, the piecewise linear approximation's efficiency, while good, is losing an appreciable fraction, making the piecewise cubic the preferred choice. Ultimately, both offer vast time savings by factors of 250 or higher.

\begin{table}[htb]
\centering
\caption{The cost savings, variance reductions, and possible speed ups from approximating the non-central $ \chi^2 $ distribution.}
\label{tab:savings_chi}
\begin{tabular}{lcclrcr}
Approximation  & $ {\log}_2 \left(\tfrac{\widetilde{V}_l}{\hat{v}_l}\right) $ & $ \tfrac{\hat{c}_l}{\tilde{c}_l} $ & \multicolumn{2}{c}{Speed up} & $ \tfrac{\widetilde{m}_l}{\widehat{m}_l} $ & \multicolumn{1}{c}{$ \tfrac{\widetilde{m}_l}{\widetilde{M}_l} $} \\[0.5em]
\hline
Piecewise linear  & $ -15 $ & 300 & 249 & (83.3\%) & 1.09 & 2900 \\
Piecewise cubic  & $ -25 $ & 300 & 298 & (99.4\%)& 1.00 & 100000  
\end{tabular}
\end{table}

\section{Conclusions}
\label{sec:conclusions}

The expense of sampling random variables can be significant. In the work presented, we proposed, developed, analysed, and implemented approximate random variables as a means of circumventing this for a variety of modern hardwares. By incorporating these into a nested multilevel Monte Carlo framework we showcased how the full speed improvements offered can be recovered with near perfect efficiency without losing any accuracy. With a detailed treatment, we showed that even for basic simulations of geometric Brownian motions requiring Gaussian random variables, speedups of a factor 5 or more can be expected. The same framework was also applied to the more difficult Cox-Ingersoll-Ross process and its non-central $ \chi^2 $ distribution, offering the potential for vast speedups of a factor of 250 or more. 

For sampling from a wide class of univariate distributions, we based our work on the inverse transform method. Unfortunately, this method is expensive as it relies on evaluating a distribution's inverse cumulative distribution function. We showed that many implementations for the Gaussian distribution, while accurate to near machine precision, are ill-suited in several respects for modern vectorised hardware. To address this, we introduced a generalised notion of approximate random variables, produced using approximations to a distribution's inverse cumulative distribution function. The two major classes of approximations we introduced were: piecewise constant approximations using equally spaced intervals, and piecewise linear approximations using geometrically small intervals dense near the distribution's tails. These cover a wide class of possible approximations, and notably recover as special cases: Rademacher random variables and the weak Euler-Maruyama scheme \citep{glasserman2013monte}, moment matching schemes by \citet{muller2015improving}, and truncated bit approximations by \citet{giles2019random_quadrature}. For piecewise constant and linear approximations, we analysed and bounded the errors for a range of possible norms. The significance of these bounds is that they are valid for arbitrarily high moments, which extends the results from \citet{giles2019random_quadrature}, and crucially is necessary for the nested multilevel Monte Carlo analysis by \citet{giles2020approximate}. Lastly, these approximations were able to achieve very high fidelities, with the piecewise linear approximation using a geometric sequence of intervals providing high resolution of singularities.

With the approximations detailed from a mathematical perspective, we highlighted two possible implementations in C \citep{sheridan2020approximate_random}. The benefit of these approximations is that they are by design ideally suited for modern vector hardware and achieve the highest possible computational speeds. They can be readily vectorised using OpenMP SIMD directives, have no conditional branching, avoid division and expensive function evaluations, and only require simple additions, multiplications, and bit manipulations. The piecewise constant and linear implementations were orders of magnitude faster than most freely available open source libraries, and typically a factor of 5--7 times faster than the proprietary Intel Maths Kernel Library, achieving close to the maximum speed of reading and writing to memory. This speed comes from the simplicity of their operations, and heavy capitalisation on the fast speeds of querying the cache and vector registers. 

Incorporating approximate random variables into the nested multilevel Monte Carlo framework by \citet{giles2020approximate}, the low errors and fast speeds of the approximations can be exploited to obtain their full speed benefits without losing accuracy. Inspecting the appropriate multilevel variance reductions, we demonstrated how practitioners can expect to obtain speed improvements of a factor of 5--7 by using approximate Gaussian random variables, where the fastest approximation was the piecewise linear approximation. This appears to be the case when using either the Euler-Maruyama or Milstein schemes. 

Considering the Cox-Ingersoll-Ross process \citep{cox1985theory}, this is known to give rise to the non-central $ \chi^2 $ distribution, which is an example of a parametrised distribution, and is also extremely expensive to sample from, as we demonstrated. We applied our approximation framework to this to produce a parametrised approximation. The error of using our approximate random variables was orders of magnitude lower than approximating paths using the Euler-Maruyama scheme, showing our approximate random variables are considerably more suitable for generating path simulations than the Euler-Maruyama scheme. This circumvents the problem of the Euler-Maruyama scheme having a very large bias for the Cox-Ingersoll-Ross process \citep{broadie2006exact}. Furthermore, our implementation was substantially quicker than those by CDFLIB and Boost, offering a speed improvement of 250 times or higher. 

All of the code to produce these figures, tables, and approximations is freely available and hosted in repositories by \citet{sheridan2020approximate_inverse,sheridan2020approximate_random}, along with an introductory guide aimed at practitioners \citep{sheridan2020approximate_random}.

\section{Acknowledgements}

We would like to acknowledge and thank those who have financially sponsored this work. This includes the Engineering and Physical Sciences Research Council (EPSRC) and Oxford University's centre for doctoral training in Industrially Focused Mathematical Modelling (InFoMM), with the EP/L015803/1 funding grant. Furthermore, this research stems from a PhD project \citep{sheridan2020nested} which was funded by Arm and NAG. Funding was also provided by the EPSRC ICONIC programme grant EP/P020720/1, the Hong Kong Innovation and Technology Commission (InnoHK Project CIMDA), and by Mansfield College, Oxford.  

\bibliography{references}

\end{document}